\numberwithin{equation}{section}
\newtheorem{theorem}{Theorem}[section]
\newtheorem{cor}[theorem]{Corollary}
\newtheorem{lemma}[theorem]{Lemma}
\newtheorem{prop}[theorem]{Proposition} \theoremstyle{definition}
\newtheorem{definition}[theorem]{Definition}
 \theoremstyle{remark}
\newtheorem{rem}[theorem]{Remark}
\newtheorem{exam}[theorem]{Example}
\newtheorem{lem}[theorem]{Lemma}
\def\beq{\begin{equation} } \def\eeq{\end{equation}}
\newcommand{\dd}[2]{\genfrac{}{}{0pt}{}{#1}{#2}}
\def\wh{\widehat}
\def\la{\lambda}
\def\wt{\widetilde}
 \def\cC{\mathcal C}
 \def\cB{ {\mathcal B} } \def\cC{ {\mathcal
C} }    \def\cG{ {\mathcal G}
}
\DeclareMathOperator{\sign}{sign}
\DeclareMathOperator{\sym}{Sym}
\DeclareMathOperator{\card}{card}
\DeclareMathOperator{\Ker}{ker}
\DeclareMathOperator{\Span}{span}
\def\RR{\mathbb R}
\def\eps{\varepsilon}  
\def\ben{\begin{enumerate} }
\def\een{\end{enumerate} }
  \newcommand{\mycontentsbox}{%
    {\centerline{NOT FOR PUBLICATION}
          \tableofcontents}}
  \def\enddoc@text{\ifx\@empty\@translators \else\@settranslators\fi
    \ifx\@empty\addresses \else\@setaddresses\fi
    \newpage\mycontentsbox}
\begin{document}
\title{Sign patterns for chemical reaction networks}

\author[Helton]{J. William Helton${}^1$}
\address{J. William Helton, Mathematics Department,
University of California at San Diego, La Jolla CA 92093-0112}
\email{helton@ucsd.edu}
\thanks{${}^1$Partially supported by the NSF (DMS-0700758,
DMS-0757212) and the Ford Motor Co.}

\author[Katsnelson]{Vitaly Katsnelson}
\address{Vitaly Katsnelson, Mathematics Department,
University of California at San Diego, La Jolla CA 92093-0112}
\email{vkatsnel@ucsd.edu}

\author[Klep]{Igor Klep${}^2$}
\address{Igor Klep, Univerza v Ljubljani, Fakulteta za matematiko in fiziko,
Jadranska 19, SI-1111 Ljubljana\\
and Univerza v Mariboru, Koro\v ska 160, SI-2000 Maribor, Slovenia}
\email{igor.klep@fmf.uni-lj.si}
\thanks{${}^2$Financial support from the state
budget by the Slovenian Research Agency (project No. Z1-9570-0101-06).}

\subjclass[2000]{80A30, 15A48, 37N25}
\date{\today}
\keywords{sign pattern, signed matrix, chemical reaction network}

\begin{abstract}
Most differential equations
found in chemical reaction networks (CRNs) have the form:
$$
\frac{dx}{dt}= f(x)= Sv(x),
$$
where $x \geq 0$, that is, $x$ lies in the nonnegative
orthant $\RR_{\geq 0}^d$, where
$S$ is a real $d\times d'$ matrix
(\emph{stoichiometric} matrix)  and $v$ is a column
vector consisting of $d'$ real-valued functions
having a special relationship to $S$.
Our main interest will be in the
Jacobian matrix, $f'(x)$, of $f(x)$,
in particular in whether or not each entry $f'(x)_{ij}$
has the same sign for all $x$ in the orthant,
i.e., the Jacobian respects a sign pattern.
 In other words species $x_j$ always acts on species $x_i$
in an inhibitory way or its action is always excitatory.

In \cite{HKG} we gave necessary and sufficient conditions
on the species-reaction graph naturally associated
to $S$ which guarantee that the
Jacobian of the associated CRN
has a sign pattern.
In this paper, given $S$ we give a construction
which adds certain rows and columns to $S$,
thereby producing a stoichiometric matrix
$\widehat S$ corresponding to a
new CRN with some added species and reactions.
The Jacobian for this CRN based on $\wh S$  has a sign pattern.
The equilibria for the $S$ and the $\wh S$ based CRN
are in exact one to one correspondence
with each equilibrium $e$ for the original CRN gotten
from an  equilibrium $\widehat e$ for the new CRN
by removing its added species.
In our construction of a new CRN we are allowed to choose rate
constants
for the added reactions
and if we choose them large enough
the equilibrium $\wh e$ is locally asymptotically stable if and only if the equilibrium
$e$ is locally asymptotically stable.
Further properties of the construction are shown,
such as those pertaining to conserved quantities
and to how the  deficiencies   of the two CRNs compare.
\end{abstract}

\maketitle

\section{Introduction}
\label{sec:intro}

In this paper we are concerned with polynomial systems of equations
arising from systems of
ordinary differential equations (ODEs) which act on the
nonnegative orthant $\RR_{\geq 0}^d$ in $\mathbb R^d$:
\beq
\label{eq:de}\frac{dx}{dt}=   f(x),
\eeq
where $f:\RR_{\geq 0}^d\to\RR^d$.
The differential equations we
address are of a special form found in chemical
reaction kinetics:
\beq\label{eq:chemde}
\frac{dx}{dt}=Sv(x),
\eeq
where $S$ is a real $d\times d'$ matrix and $v$ is a column
vector consisting of $d'$ real-valued functions.
An ODE \eqref{eq:de} has {\bf reaction form} provided
it is represented as in \eqref{eq:chemde} with $v(x)=\begin{bmatrix}
v_1 & \cdots & v_{d'}\end{bmatrix}^t$
and
\begin{equation}
\label{def:vdepends}
v_j \text{ depends exactly on variables } x_i \text{ for which }
S_{ij}<0.
\end{equation}
Since such ODEs are identified with chemical reaction networks, we often
refer to these as CRNs.
Call $S$ the {\bf stoichiometric matrix} and the entries of $v(x)$
the {\bf fluxes}.
We always assume the fluxes are continuously differentiable.
Furthermore, in many situations
all fluxes $v_j(x) $ are monotone nondecreasing
in each $x_i$ when the other variables are fixed, that is,
$v'(x)=\begin{bmatrix}
\frac{\partial v_i(x)}{\partial x_j}\end{bmatrix}_{ij}$,
the Jacobian of $v$, has all entries nonnegative
for all $x\in\RR^d_{> 0} 0$.
This happens in classical mass action kinetics
or for Michaelis-Menten-Hill type fluxes.
See \cite{BQ,Pa} for an exposition.
We shall develop a  few matrix theoretic phenomena
bearing on the properties of $f'(x)= S v'(x)$,
called the {\bf Jacobian of the reaction}.
For monotone nondecreasing fluxes
the reaction form  property \eqref{def:vdepends} is equivalent to
\beq
\label{eq:vdepjacI}
\frac{\partial v_j(x)}{\partial x_i}  \not\equiv 0\quad
\Leftrightarrow \quad \left(
\frac{\partial v_j(x)}{\partial x_i} \geq 0 \;\text{  and }
\not\equiv 0 \right)\quad
\Leftrightarrow \quad
S_{ij}<0.
\eeq
and this is what we shall mostly be using.

 The property analyzed in this paper is whether
 or not each entry $f'(x)_{ij}$ of the Jacobian of the CRN
 has an  unambiguous sign, that is it is the same for all
 $x \in \RR^{d}_{>0}$.
If the sign $f'(x)_{ij}$ is minus (resp.~plus), then the effect of species $j$
on species $i$ is always  inhibitory (resp.~excitatory).
 If such is the case we  say that
 $f'$ respects a sign pattern.
 
\subsection{Sign Pattern of $AA^t$}
\label{sec:aat}

Given this monotone property, we employ the language of signed matrices
\cite{BS}.
Call a {\bf sign pattern} a matrix $A$ with entries which are $\pm
a_{ij}$ or $0$, where $a_{ij}$ are free variables.
To a \emph{real} matrix $B$ we can associate its sign pattern $A={\rm SP}(B)$
with $\pm a_{ij}$ or $0$ in the correct locations.
Given a matrix with \emph{symbolic} entries (i.e., polynomials)
we might or might not be
able to associate a sign pattern. Here, we think of the free variables as
being \emph{positive}.

\begin{exam}
If $B=\begin{bmatrix}
0 & 6\\
-2 & -5
\end{bmatrix}$,
then $A={\rm SP}(B) = \begin{bmatrix}
0 & +a_{12}\\
-a_{21} & -a_{22}
\end{bmatrix}$
and
$$
AA^t=
\begin{bmatrix}
a_{12}^2 & -a_{12}a_{22}\\
-a_{12}a_{22} & a_{21}^2+a_{22}^2
\end{bmatrix}.
$$
Observe that $AA^t$ respects a
sign pattern.

On the other hand, if $B=\begin{bmatrix}
-1 & 6\\
-2 & -5
\end{bmatrix}$,
then $A={\rm SP}(B) = \begin{bmatrix}
-a_{11} & +a_{12}\\
-a_{21} & -a_{22}
\end{bmatrix}$
and
$$
AA^t=
\begin{bmatrix}
a_{11}^2+a_{12}^2 & a_{11}a_{21}-a_{12}a_{22}\\
a_{11}a_{21}-a_{12}a_{22} & a_{21}^2+a_{22}^2
\end{bmatrix}
$$
does \emph{not} respect a sign pattern. Namely, the off-diagonal entries
of $AA^t$ are not positive linear combinations of monomials in the $a_i$.
They may attain positive and negative values when evaluated at
appropriate positive values of the $a_{ij}$.
\end{exam}

\begin{theorem}[cf.~\protect{\cite[Theorem 5.1]{HKG}}]
\label{thm:super}
Let $A$ be a sign pattern. The hermitian square $AA^t$ of $A$
respects a sign pattern if and only if $A$ does not contain
a $2\times 2$ submatrix whose rows and columns can be permuted
to obtain a matrix whose sign pattern agrees with the one of
\begin{equation}\label{eq:2cycle3pm}
\begin{bmatrix}+1&-1\\-1&-1\end{bmatrix}
\quad\text{or}\quad
\begin{bmatrix}-1&+1\\+1&+1\end{bmatrix}.
\end{equation}
Such $ 2 \times 2$ matrices either  contain $3$ minus signs
 and $1$ plus sign,
or  they  contain $1$ minus sign and $3$ plus signs.
\end{theorem}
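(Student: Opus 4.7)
The plan is to reduce the question entry-by-entry: analyze when a single entry $(AA^t)_{ik}$ fails to respect a sign pattern, and then translate that failure into the presence of a forbidden $2\times 2$ submatrix of $A$.

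First I would expand
\[
(AA^t)_{ik} \;=\; \sum_j A_{ij}A_{kj},
\]
and observe that each summand is either zero or $\pm a_{ij}a_{kj}$. Since the free variables $a_{ij}$ are algebraically independent, the monomials $a_{ij}a_{kj}$ for different values of $j$ (with row-pair $(i,k)$ fixed) are distinct, so no cancellation can occur across summands. Consequently $(AA^t)_{ik}$ respects a sign pattern if and only if all nonzero products $A_{ij}A_{kj}$ have a common sign. The diagonal entries $(AA^t)_{ii} = \sum_j A_{ij}^2$ are automatically positive sums of monomials, so they cause no trouble; the whole question is about $i \neq k$.

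Next I would translate the obstruction. The entry $(AA^t)_{ik}$ fails to have unambiguous sign exactly when there exist column indices $j_1, j_2$ for which $A_{ij_1}, A_{kj_1}, A_{ij_2}, A_{kj_2}$ are all nonzero and the two products $A_{ij_1}A_{kj_1}$ and $A_{ij_2}A_{kj_2}$ have opposite signs. That condition says precisely that the $2\times 2$ submatrix
\[
\begin{bmatrix} A_{ij_1} & A_{ij_2}\\ A_{kj_1} & A_{kj_2}\end{bmatrix}
\]
has no zero entry and contains an odd total number of minus signs (one column even, one column odd).

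Finally I would classify such $2\times 2$ sign matrices up to permutation of rows and columns. A fully nonzero $\pm 1$ matrix of size $2\times 2$ with an odd number of minuses has either exactly one minus sign or exactly three; row/column swaps act transitively on the four possible positions of the lone minus (resp.\ lone plus), so up to permutation there are exactly two equivalence classes, represented by the two matrices displayed in \eqref{eq:2cycle3pm}. Combining: $AA^t$ respects a sign pattern iff no $2\times 2$ submatrix of $A$ is permutation-equivalent to one of these two patterns, which is the claim.

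I do not expect any serious obstacle: the crux is recognizing that the independence of the free variables $a_{ij}$ turns ``respects a sign pattern'' into the combinatorial condition that all contributing monomials share a sign. The only step requiring a bit of care is the enumeration in the last paragraph, to ensure that the two listed forbidden patterns really exhaust, up to row/column permutations, all $2\times 2$ sign matrices with no zeros and an odd number of minus signs.
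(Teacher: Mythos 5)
Your proposal is correct and follows essentially the same route as the paper: expand $(AA^t)_{ik}=\sum_j A_{ij}A_{kj}$, observe that the entry fails to respect a sign exactly when two columns yield products of opposite signs, and identify the resulting fully nonzero $2\times 2$ submatrix with one of the two forbidden patterns up to row/column permutation. Your added remarks on the algebraic independence of the $a_{ij}$ (ruling out cancellation) and the explicit enumeration of the $2\times 2$ sign classes merely make explicit what the paper leaves implicit.
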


\begin{proof}[Proof of Theorem {\rm\ref{thm:super}}]
Suppose the entry $(AA^t)_{ij}$ of $AA^t$ fails to respect a sign. As
$$
(AA^t)_{ij}=\sum_k A_{ik} A^t_{kj} = \sum_k A_{ik} A_{jk},
$$
this is equivalent to not all terms of the last sum
having the same sign. Which is equivalent to
the existence of
$k,\ell$ with $\sign A_{ik}=\sign A_{jk}\neq 0$ and
$\sign A_{i\ell}=-\sign A_{j\ell}\neq 0$. Hence the $2\times 2$
submatrix
$$
\begin{bmatrix}
A_{ik} & A_{jk}\\
A_{i\ell} & A_{j\ell}
\end{bmatrix}
$$
of $A$ will (after a possible permutation of rows and columns)
have the same sign pattern as one of the matrices in \eqref{eq:2cycle3pm}.
\end{proof}

\subsection{Sign pattern for the Jacobian}
In the language of chemical reaction networks Theorem \ref{thm:super}
has an interpretation
as follows:

\begin{theorem}\label{thm:chemSP}
The Jacobian $f'(x)=Sv'(x)$
of the right hand side of
a reaction form ODE \eqref{eq:chemde} with monotone
nondecreasing fluxes
respects a sign pattern in the positive orthant whenever $S$
does not have a
$2\times 2$ submatrix whose rows and columns can be permuted
 to obtain
 a matrix whose sign pattern agrees with the one of
  \begin{equation}\label{eq:2cycle3m}
  \begin{bmatrix}+1&-1\\-1&-1\end{bmatrix}.
 \end{equation}

Conversely, under mass action kinetics $($cf.~{\rm \cite{BQ,Pa}} or
Section \S {\rm\ref{sec:equilib}}$)$, if 
a reversible stoichiometric matrix $S$ contains such a submatrix, then 
$f'(x)=Sv'(x)$ 
fails to respect a sign pattern
on an open dense set of
reversible matrices having the same sign pattern as $S$.
\end{theorem}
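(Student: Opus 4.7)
The plan is to treat the two implications separately, in both cases adapting the bookkeeping from the proof of Theorem \ref{thm:super}.

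For sufficiency I expand
$$
f'(x)_{ik}=\sum_{j} S_{ij}\,\frac{\partial v_j(x)}{\partial x_k}.
$$
By the reaction-form equivalence \eqref{eq:vdepjacI} the $j$-th summand is nonzero iff $S_{ij}\neq 0$ and $S_{kj}<0$, and when it is nonzero its sign on $\Rnp$ equals $\sign(S_{ij})$. Hence $f'(x)_{ik}$ has a well-defined sign unless some pair of columns $j,j'$ satisfies $S_{kj},S_{kj'}<0$ and $\sign(S_{ij})=-\sign(S_{ij'})\neq 0$, which, after reordering, is precisely the forbidden pattern \eqref{eq:2cycle3m} on rows $\{i,k\}$ and columns $\{j,j'\}$. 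Only the first of the two patterns of \eqref{eq:2cycle3pm} can arise (not the second) because the reaction-form constraint pins both $k$-entries of the offending columns to be negative.

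For the converse, under mass action $v_j(x)=k_j\prod_{\ell:\,S_{\ell j}<0} x_\ell^{|S_{\ell j}|}$, so
$$
f'(x)_{ik}=\sum_{j:\,S_{kj}<0}\sign(S_{ij})\,|S_{ij}|\,|S_{kj}|\,k_j\, x_k^{|S_{kj}|-1}\prod_{\substack{\ell\neq k\\ S_{\ell j}<0}} x_\ell^{|S_{\ell j}|}
$$
is a Laurent polynomial in $x$. The pair of columns $j,j'$ witnessing the forbidden submatrix contributes two terms whose coefficients have opposite signs. My plan is to establish that for $S$ in an open dense subset of reversible matrices with the given sign pattern, the exponent vectors attached to distinct summands are pairwise distinct and each appears as a vertex of the Newton polytope of $f'(x)_{ik}$. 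A standard dominance argument then concludes: pick $t\in\RR^d$ for which $\alpha\mapsto \alpha\cdot t$ is uniquely maximized at a chosen vertex, evaluate at $x=(e^{st_1},\dots,e^{st_d})$, and let $s\to\infty$ to force the sign of $f'(x)_{ik}$ to agree with that vertex coefficient; carrying this out at the two opposite-sign vertices produces points where $f'(x)_{ik}$ is positive and where it is negative.

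The main obstacle is the genericity step. The exponent vector coming from column $j$ is determined by the magnitudes $(|S_{\ell j}|)_{\ell:\,S_{\ell j}<0}$ (offset by $-1$ in position $k$), so the bad locus — where two of the exponent vectors coincide, or where one of the witnessing monomials lies in the convex hull of the others — is cut out by finitely many polynomial relations in the independent magnitudes $|S_{\ell j}|$. The crux is to verify that these relations are not identically satisfied on the reversible-$S$ parameter space, so that the bad locus is a proper algebraic subvariety whose complement furnishes the desired open dense set. Reversibility only pairs each column with its opposite-sign copy, and the copy contributes nothing to $\partial v/\partial x_k$ when $S_{kj}<0$, so reversibility leaves the magnitudes of the contributing columns free to be perturbed independently, which is what is needed to make the vertex argument work.
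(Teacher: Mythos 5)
Your sufficiency argument is correct and is essentially the paper's own proof in different clothing: the paper splits $S=S_+-S_-$ and locates the two columns $k,\ell$ with $S_{ik}>0,S_{jk}<0$ and $S_{i\ell}<0,S_{j\ell}<0$, while you expand $f'(x)_{ik}=\sum_j S_{ij}\,\partial v_j/\partial x_k$ and extract the same pair of columns from two summands of opposite sign; the observation that the reaction-form constraint pins both row-$k$ entries negative (so only the first pattern of \eqref{eq:2cycle3pm} can occur) is exactly the point of the paper's refinement from Theorem \ref{thm:super} to Theorem \ref{thm:chemSP}. For the converse the paper itself only gestures (``reverse the reasoning, genericity rules out fluke cancellations'') and defers to \cite[\S 3.1]{HKG}, so your Newton-polytope plan is an honest attempt to supply what the paper omits. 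However, as written it has a genuine gap.

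The gap is the claim that, generically, \emph{each of the two witnessing monomials} is a vertex of the Newton polytope of $f'(x)_{ik}$. That is false, and not just on a measure-zero set. Take the reactions $K\to A+B$, $2K\to A$, $3K\to A$, $K+A\to C$, with $i$ the row of $A$ and $k$ the row of $K$, and let the bad submatrix be witnessed by $j=(2K\to A)$ and $j'=(K+A\to C)$. The exponent vectors of the four contributing columns are $0$, $e_k$, $2e_k$ and $e_i$; the witnessing exponent $e_k$ is the midpoint of $0$ and $2e_k$ and remains a non-vertex on a whole open set of magnitude perturbations (namely whenever $|S_{kj}|$ stays between the other two $K$-stoichiometries), so no genericity statement of the kind you propose can hold. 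The fix is to drop the insistence on the witnessing monomials and exploit the structural fact already implicit in your displayed formula: a contributing column $m$ has coefficient of sign $\sign(S_{im})$, and its monomial contains $x_i$ to the power $|S_{im}|>0$ precisely when $S_{im}<0$, and to the power $0$ precisely when $S_{im}>0$. Hence the linear functional $\alpha\mapsto\alpha_i$ takes the value $0$ exactly on the positive-coefficient exponents and positive values exactly on the negative-coefficient ones; both sets are nonempty because of the witnessing pair. The face of the Newton polytope minimizing $\alpha_i$ therefore carries only positive coefficients and the face maximizing it only negative ones, so a vertex of each sign exists, and your exponential substitution $x=(e^{st_1},\dots,e^{st_d})$ then produces points of both signs. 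This separation also shows that columns sharing an exponent vector all have coefficients of the same sign, so the ``fluke cancellation'' the paper worries about cannot occur under mass action for this entry; the genericity in the statement is then needed only for the issues you already flagged, not for the vertex claim as you formulated it.
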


Since, it is brief we review  why this is true.
Write $S=S_+-S_-$ for real matrices $S_+$, $S_-$ with nonnegative
entries satisfying the complimentarity property
$(S_+)_{ij} (S_-)_{ij}=0$.
If the $(i,j)$th entry of $f'(x)=Sv'(x)$ does not have a
sign pattern, then $(S_+v'(x))_{ij}\ne 0$ and $(S_-v'(x))_{ij}\ne 0$.
As
$$(S_+v'(x))_{ij}=\sum_k (S_+)_{ik} v'(x)_{kj},$$
$(S_+v'(x))_{ij}\ne 0$ if and only if
for some $k$, $S_{ik} > 0$ and
$v'(x)_{kj}\not \equiv  0$,
so from the
reaction form property \eqref{eq:vdepjacI},
we get $S_{jk}<0$.
To summarize: $S_{ik} > 0$ and $S_{jk}<0$.
Similarly,
$(S_-v'(x))_{ij}\ne 0$ if and only if there is some $\ell$ with
$(S_-)_{i\ell} \ne 0$ and  $v'(x)_{\ell j} \not \equiv 0 $,
so we get $S_{j\ell}<0$.
To summarize: $S_{i\ell} < 0$ and $S_{j\ell}<0$.
Taken together this implies that the $2\times 2$ submatrix
of $S$ given by rows $i,j$ and columns $k,\ell$ has
the same sign pattern as
the matrix \eqref{eq:2cycle3m}, up to a permutation of rows and columns.
The converse reverses this  line of reasoning,
with the hypothesis requiring robustness under small perturbations
of $S$ to rule out fluke cancellations.

See \cite[\S 3.1]{HKG} for details and extensions.

That many CRNs have Jacobians with sign patterns is the basis 
for the works of Thomas \cite{Th}, 
Kaufman \cite{TK}, Soul\'e \cite{So}, Gouz\'e \cite{Go}, 
Sontag \cite{AS,ArS06,ArS08}, and many subsequent
publications. Typically Sontag and collaborators
assume this and something considerably
stronger to obtain results on {\it globally stable} equilibria.
Sontag has had the philosophy for many years that
any CRN can be modelled carefully
to have Jacobians respecting sign patterns.

This paper concerns CRNs whose Jacobians \emph{do not} have a sign pattern
and describes a method for transforming
such
a system of ODEs
into
 a system of ODEs whose Jacobian \emph{does} have a sign pattern,
and for which the equilibria of both CRNs remain ``the same''
 (see \S \ref{sec:equilib} for a precise formulation).
We shall refer to this as the sign fixing algorithm.

\section{Fixing the sign pattern for the Jacobian}
\label{sec:signpat}

In the first part of of this section,
\S \ref{sec:algor}, we describe our sign fixing algorithm.
In subsequent (sub)sections we show that the algorithm has
several (pleasant) properties.
Sections \S \ref{sec:eq} and \S \ref{sec:defic} 
show how equilibria and the classical notion
of deficiency behave with respect to our algorithm.

\subsection{An algorithm for eliminating non-signed entries of $f'$}
\label{sec:algor}

Let $S$ be a stoichiometric matrix associated to a chemical network
with a submatrix whose sign pattern coincides with 
that
of \eqref{eq:2cycle3m},
which Theorem \ref{thm:chemSP} demonstrates is an obstruction
for having a sign pattern.
Let $A,B$ be the species representing the two rows of $S$
corresponding to this {\bf bad submatrix}.
Consider the two columns of $S$ belonging to this bad
submatrix.

\bigskip

\hspace{2in}
\setlength{\unitlength}{1cm}
\begin{picture}(3,3)(-1.4,0)
\put(0,0){\line(1,0){3}}
\put(0,0){\line(0,1){3}}
\put(3,0){\line(0,1){3}}
\put(0,3){\line(1,0){3}}
\put(-0.4,2){$A$}
\put(-0.4,1){$B$}
\put(.6,2){$-p_1$}
\put(.9,1){$p_2$}
\put(1.6,2){$-p_3$}
\put(1.6,1){$-p_4$}
\put(.6,3.2){\eqref{eq:bad1}}
\put(1.7,3.2){\eqref{eq:bad2}}
\put(-1.5,1.5){$S=$}
\end{picture}

\noindent These yield two reactions in the network of the following form:
\begin{align}
                 \label{eq:bad1} p_1A+C_1 &\to p_2B + C_2 & \\
\label{eq:bad2}               p_3A+p_4B+C_3 & \to C_4,
                    \end{align}
where $p_i \in \mathbb{N}$
and $C_i$ are some (possibly empty) positive linear combinations
of species (avoiding $A$ and $B$).
\bigskip

We will construct
a new network from $S$ to eliminate this bad submatrix.
Consider the following network,
where each reaction of the original CRN remains
the same except that we add a new species $B'$, reaction \eqref{eq:bad1}
is replaced by
\begin{equation}\label{eq:bad1'}
 p_1A+C_1 \rightarrow B' + C_2,
 \end{equation}
and we create an additional reaction
\begin{equation}\label{eq:bad2'}
B' \rightarrow p_2B.
\end{equation}
Notice that the stoichiometric matrix $\check S$
associated to this new chemical network
will not have the bad submatrix we started with.
Also, no new bad submatrices have been added in this process. Thus we
have reduced the number of bad submatrices.

\bigskip

\bigskip

\hspace{2in}
\setlength{\unitlength}{1cm}
\begin{picture}(3,3)(5.6,0)
\put(5.4,1.5){$\check S=$}
\put(7,0){\line(1,0){3.7}}
\put(7,-0.7){\line(0,1){3.7}}
\put(10,-0.7){\line(0,1){3.7}}
\put(6.6,2){$A$}
\put(6.6,1){$B$}
\put(6.5,-0.47){$B'$}
\put(7.6,2){$-p_1$}
\put(8.6,1){$-p_4$}
\put(8.6,2){$-p_3$}
\put(10.15,1){$p_2$}
\put(7.6,3.2){\eqref{eq:bad1'}}
\put(8.7,3.2){\eqref{eq:bad2}}
\put(9.94,3.2){\eqref{eq:bad2'}}
\put(7,3){\line(1,0){3.7}}
\put(7.9,-.47){$1$}
\put(7.95,.98){$0$}
\put(10.1,-.47){$-1$}
\put(7,-0.7){\line(1,0){3.7}}
\put(10.7,-0.7){\line(0,1){3.7}}
\put(7.1,-.47){$0$}
\put(8.9,-.47){$0$}
\put(9.7,-.47){$0$}
\put(10.25,0.15){$0$}
\put(10.25,1.9){$0$}
\put(10.25,2.6){$0$}
\end{picture}
\vspace{.9cm}

\noindent 
We continue applying the same procedure (on this new network)
to eliminate any other existing bad
$2\times 2$ submatrices.

In matrix terms, each time we apply this procedure to eliminate a bad submatrix, we change one column (e.g.~changing reaction \eqref{eq:bad1} into
\eqref{eq:bad1'}) of $S$ and we append one additional row (e.g.~for the
``species'' $B'$)
and column (e.g.~for the reaction \eqref{eq:bad2'}) to $S$. We will call this procedure the \textbf{sign fixing algorithm}.

\begin{definition} Let $S$ be a stoichiometric matrix corresponding to a chemical network, and suppose $S$ has bad submatrices.
We write $\widehat{S}$ for a new stoichiometric matrix with no bad submatrices obtained by the sign fixing algorithm applied to each bad submatrix as explained above.
$\widehat{S}$ is called a \textbf{sign fixing matrix} of $S$.
\end{definition}

Each step of the sign fixing algorithm has the interpretation
that we are keeping track of additional information.
Namely, we measure how much of species $B$ is produced
by reaction \eqref{eq:bad1} thereby obtaining $B'$
and also we measure how much of species $B$
is consumed by  reaction \eqref{eq:bad2}.
Of course,
in the original CRN we just kept track of the net
amount of $B$, a single species, now we have replaced this with two
species.
The added reaction has the effect of identifying
the two species asymptotically.

The above definition implies that if $S$ has no bad submatrices,
then $S=\widehat S$ is the sign fixing matrix of itself.
If $S$ has multiple bad submatrices, we obtain $\widehat S$
by a finite number of applications of the sign fixing algorithm.

We emphasize what we have found so far by stating

\begin{theorem}
Given a CRN one can derive a sign fixed CRN.
The Jacobian of the sign fixed CRN respects a sign pattern,
provided each flux $v_j$ is   monotone increasing on each $x_i$.
\end{theorem}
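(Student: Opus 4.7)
The proof plan proceeds by induction on the number of bad $2\times 2$ submatrices of the stoichiometric matrix $S$. The base case is immediate: if $S$ has no bad submatrices, then $\widehat S = S$ and Theorem \ref{thm:chemSP} already furnishes the desired sign-pattern-respecting Jacobian. For the inductive step, the plan is to pick one bad submatrix of $S$, apply one iteration of the sign fixing algorithm to produce $\check S$ as described in \S\ref{sec:algor}, and verify two things: that $\check S$ still defines a reaction-form CRN with monotone increasing fluxes, and that the number of bad submatrices in $\check S$ is strictly smaller than in $S$.

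Checking the reaction-form property \eqref{def:vdepends} for $\check S$ is quick. The modified column (for \eqref{eq:bad1'}) has negative entries exactly at the reactants $A$ and the species of $C_1$, since it is the column of \eqref{eq:bad1} in $S$ with its $B$-entry changed from $+p_2$ to $0$ and a new $B'$-entry equal to $+1$; so the flux for \eqref{eq:bad1'} can be taken with the same reactant dependence as the original flux for \eqref{eq:bad1}. The new column for \eqref{eq:bad2'} has its unique negative entry in row $B'$, and I would take the flux for \eqref{eq:bad2'} to be any monotone increasing function of $x_{B'}$ alone (e.g.\ mass-action kinetics with any chosen positive rate constant). Both new fluxes are manifestly monotone increasing.

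The heart of the argument is to show that the step strictly reduces the bad submatrix count. Destruction of the targeted bad submatrix is immediate: on rows $A,B$ and the two columns for \eqref{eq:bad1'} and \eqref{eq:bad2}, the $B$-entry of the \eqref{eq:bad1'} column is $0$, violating the all-nonzero requirement for badness. To show that no new bad submatrix is created, I plan a short case analysis. The new row $B'$ has only two nonzero entries, namely $+1$ in the \eqref{eq:bad1'} column and $-1$ in the \eqref{eq:bad2'} column, so any bad submatrix using row $B'$ must use exactly these two columns; but then the only other row with a nonzero entry in the \eqref{eq:bad2'} column is $B$ (and its entries in the two columns are $0$ and $+p_2$), so the resulting $2\times 2$ has a zero entry and is not bad. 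Symmetrically, the new column for \eqref{eq:bad2'} has only rows $B,B'$ nonzero, which reduces to the case just handled. Finally, any bad submatrix that avoids the new row and the new column consists of entries identical to those of $S$ except possibly at the $B$-entry of the modified \eqref{eq:bad1'} column, which dropped from $+p_2$ to $0$; turning a nonzero entry into $0$ can only destroy existing bad submatrices, never create new ones.

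Iterating finitely often produces $\widehat S$ with no bad submatrices, and one final application of Theorem \ref{thm:chemSP} to $\widehat S$ together with its monotone increasing flux vector yields the sign pattern for the Jacobian of the sign-fixed CRN. The main (mild) obstacle is the case analysis in the previous paragraph; once one notices that each newly introduced nonzero entry is supported in only two positions of its row or column, the remaining cases are routine.
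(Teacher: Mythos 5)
Your proposal is correct and follows essentially the same route as the paper: reduce the bad-submatrix count by one per step of the algorithm and then invoke Theorem \ref{thm:chemSP} on the resulting $\widehat S$. The paper's proof is a one-liner that delegates the "no new bad submatrices are created" claim to the algorithm's description and Remark \ref{rem:sflessbad}; your case analysis on the new row and column (each supported in only two positions) is exactly the justification the paper has in mind there.
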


This follows from Theorem \ref{thm:chemSP} because 
we have produced a sign fixed CRN $\wh S$ 
not containing the forbidden pattern \eqref{eq:2cycle3m}.

\begin{exam}
\label{ex:sf}
We now present a modification of Example
\cite[Table 1.1.(v)]{CF05}
 of a CRN who's Jacobian fails to have a sign pattern,
 and we illustrate  use of  the sign fixing algorithm
 to create a network who's Jacobian has a sign pattern.
Consider the network:
\begin{eqnarray}
\label{eq:reaction1}
A+B &\to & F 
\\
\label{eq:reaction2}
A+C &\to & G
\\
\label{eq:reaction3}
C+D &\rightleftharpoons&  B
\\
\label{eq:reaction4}
C+E & \rightleftharpoons & 2D
\end{eqnarray}
\def\bo{ {\large \bf{1} } }
\def\bt{{\large \bf{2} } }
The stoichiometric matrix for this network (with species
arranged in alphabetical order) is
$$S=\begin{bmatrix}
    -1 &  -1 &  0 & 0 & 0 & 0\\
    -1 &  0 &  1 & -1 & 0 & 0\\
     0 &  -1 &  - \bo  & 1 & - \bo &  \bo \\
     0 &  0 &  - \bo  & 1 &   \bt & - \bt  \\
     0 &  0 &  0 & 0 & -1 & 1\\
     1 &  0 &  0 & 0 & 0 & 0\\
    0 &   1 & 0 & 0 & 0 & 0
    \end{bmatrix}.
    $$
The boldface entries are those appearing in bad submatrices of $S$,
of which there are two:
\begin{itemize}
\item 
$B_1$, corresponds
to species $C,D$ and the forward reactions of
\eqref{eq:reaction3} and \eqref{eq:reaction4}.

\item
$B_2$, corresponds to species $C,D$ and the
forward reaction of      \eqref{eq:reaction3}
and the reverse  reaction of \eqref{eq:reaction4}.
\end{itemize}

\noindent
The first step of the sign fixing algorithm eliminates $B_2$ and gives
$$ \check{S}
= \begin{bmatrix}
    -1 & -1 &  0   & 0 & 0   & 0 & 0\\
    -1 &  0 &  1   &-1 & 0   & 0 & 0\\
     0 & -1 & -\bo & 1 &-\bo & \bo & 0\\
     0 &  0 & -\bo & 1 & 0   &-\bt & 2 \\
     0 &  0 &  0 & 0 &-1 & 1 & 0\\
     1 &  0 &  0 & 0 & 0 & 0 & 0\\
     0 &  1 &  0 & 0 & 0 & 0 & 0\\
     0 &  0 &  0 & 0 & 1 & 0 & -1
    \end{bmatrix}.
$$
The second step eliminates $B_1$ and gives
$$ \wh{S}= \check{\check S}
= \begin{bmatrix}
    -1 & -1 &  0 & 0 & 0 & 0 & 0 & 0\\
    -1 &  0 &  1 &-1 & 0 & 0 & 0 & 0\\
     0 & -1 & -\bo & 1 &-\bo & 0   & 0 & 1\\
     0 &  0 & -\bo & 1 & 0   &-\bt & 2 & 0\\
     0 &  0 &  0 & 0 &-1 & 1 & 0 & 0\\
     1 &  0 &  0 & 0 & 0 & 0 & 0 & 0\\
     0 &  1 &  0 & 0 & 0 & 0 & 0 & 0\\
     0 &  0 &  0 & 0 & 1 & 0 &-1 & 0\\
     0 &  0 &  0 & 0 & 0 & 1 & 0 &-1
    \end{bmatrix},
    $$
Thus $\wh{S}$ is a sign fixing matrix for $S$.
\qed
\end{exam}

\subsection{The story in terms of graphs}

To $S$ one often associates a bipartite graph $\cG_S$.
One set of nodes is rows
R$i$ (chemical species)
the other set of nodes is
columns C$j$ (reactions).
If $S_{ij}$ has a $-$ sign (resp.~$+$),
then the arrow points from R$i$ into C$j$ (resp.~out of C$j$ into R$i$).
The arrow is from R$i$ into C$j$ (resp. out of C$j$ into R$i$) 
if species $i$ is consumed
(resp.~produced) in reaction $j$, respectively.
Which leads us
to refer to {\bf consumed} and {\bf produced edges}.
Also we make a distinction between dotted and solid edges,
the convention being that all dotted edges touching a particular C$j$
are either all consumed or all produced.
Likewise for solid edges. This does not determine the choice
solid vs.~dotted uniquely, for example,
completely switching the choice
of solid and dotted carries the same information.
If no reaction is reversible this would be redundant information with
the direction of arrows, but for reversible reactions dotted vs.~solid
is needed.

This graph is a simplified version of the species-reaction graph used
in \cite{CF06}.
Theorem \ref{thm:super} (see also the paragraph
following it) in this languages says

\begin{theorem}
The Jacobian $f'(x)=Sv'(x)$ of the right hand side of
a reaction form ODE \eqref{eq:chemde} with monotone
nondecreasing fluxes respects a sign pattern in the positive
orthant whenever the graph $\cG_S$
does not
contain a cycle of length four with three
consumed edges
 and one
 produced edge. That is the CRN contains two reactions and two species,
 one reaction consumes both species
 while the other reaction consumes one species and produces the other.
 \end{theorem}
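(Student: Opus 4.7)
The plan is to translate the matrix obstruction of Theorem \ref{thm:chemSP} directly into the graph-theoretic language of $\cG_S$ and then invoke that theorem. The core observation is that a $2\times 2$ submatrix of $S$ corresponds exactly to a (possibly degenerate) 4-cycle in $\cG_S$: the two chosen rows R$i$, R$j$ and the two chosen columns C$k$, C$\ell$ are four nodes of the bipartite graph, and a nonzero entry $S_{ij}$ corresponds precisely to an edge between R$i$ and C$j$. The sign of the entry determines whether that edge is a consumed edge ($-$) or a produced edge ($+$).

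First I would show one direction: suppose $\cG_S$ contains a 4-cycle R$i$--C$k$--R$j$--C$\ell$--R$i$ having three consumed edges and one produced edge. Then the $2\times 2$ submatrix of $S$ obtained by restricting to rows $i,j$ and columns $k,\ell$ has exactly four nonzero entries (one per edge of the cycle), three of them negative and one positive. By a permutation of its (two) rows and (two) columns we can put the unique $+$ in the top-left corner, producing precisely the sign pattern of \eqref{eq:2cycle3m}. Thus $S$ contains a bad submatrix.

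Conversely, if $S$ contains a bad submatrix, then after relabeling rows $i,j$ and columns $k,\ell$ we have four nonzero entries in $S$ with signs matching \eqref{eq:2cycle3m}, which by definition of $\cG_S$ gives four edges connecting the nodes R$i$, R$j$, C$k$, C$\ell$ in the cyclic pattern R$i$--C$k$--R$j$--C$\ell$--R$i$, three of them consumed edges and one produced. This is exactly the forbidden 4-cycle.

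Having established this equivalence, the theorem follows immediately from Theorem \ref{thm:chemSP}: absence of the forbidden 4-cycle in $\cG_S$ is equivalent to absence of the bad $2\times 2$ submatrix in $S$, which guarantees that $f'(x)=Sv'(x)$ respects a sign pattern on $\RR^d_{>0}$ under the monotone nondecreasing flux hypothesis. The only technical care needed is to confirm that the row/column permutation allowed in Theorem \ref{thm:chemSP} does not alter the combinatorial structure of the associated 4-cycle---which is clear since permuting rows or columns just relabels the four nodes of the cycle without changing which edges are consumed or produced. I do not anticipate serious obstacles; the whole argument is essentially a dictionary between matrix entries and bipartite edges.
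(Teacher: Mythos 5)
Your proposal is correct and takes essentially the same route as the paper: the paper offers no separate proof of this theorem, presenting it simply as the translation of Theorem \ref{thm:chemSP} (equivalently, of Theorem \ref{thm:super} and the discussion following it) into the language of the bipartite graph $\cG_S$, and your explicit dictionary between bad $2\times 2$ submatrices and $4$-cycles with three consumed edges and one produced edge is exactly that translation. The argument is sound as written.
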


 The sign fixing algorithm takes a length four cycle $\cC$ as
 in the theorem and ``breaks it'' by
 \ben[\rm (1)]
 \item
 removing an edge from the cycle
 \item
 adding a reaction node C$^*$ and species node R$^*$
 and two edges to the graph
 \een
 thereby converting $\cC$ to ``harmless'' a cycle of length six.

 \begin{exam} We return to
 Example \ref{ex:sf} and observe that its graph is
   \[ \xymatrix{
   & \begin{xy} *+{ \txt{B} }* \frm{o} \end{xy} \ar@{{}-{>}}[d]
   \ar@{{<}--{>}}[rr] & &
   \fbox{$C+D \rightleftharpoons  B$} \ar@{{<}-{>}}[dd] \ar@{{<}-{>}}[r] &
   \begin{xy} *+{ \txt{D} }* \frm{o} \end{xy} \ar@{{<}--{>}}[dd] \\
   \begin{xy} *+{ \txt{F} }* \frm{o} \end{xy} \ar@{{<}--{}}[r] &
   \fbox{$A+B \to  F$} \ar@{{<}-{}}[d] \\ & \begin{xy} *+{ \txt{A} }*
   \frm{o} \end{xy} \ar@{{}-{>}}[r] &
   \fbox{$A+C \to  G$} \ar@{{<}-{}}[r]
   \ar@{{}--{>}}[d] & \begin{xy} *+{ \txt{C} }* \frm{o} \end{xy}
   \ar@{{<}-{>}}[r] &
   \fbox{$C+E  \rightleftharpoons  D$} \ar@{{<}-{>}}[r] &
   \begin{xy} *+{
   \txt{E} }* \frm{o} \end{xy} \\ & & \begin{xy} *+{ \txt{G} }*
   \frm{o} \end{xy} } \]
   (Instead of R$i$ and C$j$ more descriptive names have been used to denote
   the nodes.)

   \end{exam}

\subsection{Uniqueness of the sign fixing matrix}\label{subsec:nonuniq}

Suppose we have a CRN with stoichiometric matrix $S$.
The sign fixing matrix $\widehat S$ of $S$ is non-unique, since
it depends on the indexing of the bad submatrices of $S$.
It turns out that this non-uniqueness is easy to classify.
One finds that any two sign fixing matrices for $S$ can be
gotten from the other by  ``conjugation'' with a permutation matrix
in a certain class which we shall describe completely.

Suppose $B=\{b_1,\dots,b_n\}$ are the bad submatrices of $S$.
Then $\widehat{S}$ is determined by the finite sequence
$S=S_0,S_1,\ldots,S_{n-1},S_n=\widehat{S}$, where $S_j$ is the sign fixing matrix of $S_{j-1}$ with respect to $b_j$.
Another problem arises if $b_i$ and $b_j$ share the same positive entry in $S$. Namely, in this case, $S_j$ will equal $S_{j-1}$ by construction, if $i<j$.
To resolve this problem, we introduce an equivalence relation on $B$:
$b_i \thicksim b_j$ if and only if $b_i$ and $b_j$ share a positive entry of $S$.
This is an equivalence relation by construction. We will use $\mathcal{B}$ to denote
the set of all equivalence classes of bad submatrices of $S$, and from now on, we identify each bad submatrix of $S$ with its equivalence class.
Thus, the number of new columns and rows in $\widehat{S}$ is precisely $\card(\mathcal{B})$.

Suppose $\mathcal{B}=\{b_1,\dots,b_n\}$. Let $\sym_n$ be the symmetric group
on $\{1,2,\ldots,n\}$, i.e., the set of all permutations of $n$ elements.
Every $\sigma \in \sym_n$ determines a sign fixing matrix
$\widehat S_{\sigma}$ of $S$ by the finite sequence $S,S_{\sigma,1},\ldots,S_{\sigma,n-1},S_{\sigma,n}=\widehat S_{\sigma}$, where $S_{\sigma,1}$ is the sign fixing matrix of $S$ with respect to $b_{\sigma(1)}$,
and $S_{\sigma,j}$ is the sign fixing matrix of $S_{\sigma,j-1}$ with respect to $b_{\sigma(j)}$.
Clearly, each sign fixing matrix of $S$ is determined by a permutation of $\cB$,
and hence by an element in $\sym_n$. We thus identify the set of all sign fixing matrices of $S$ with the elements of $\sym_n$.

Our result in this subsection gives
a map from one sign fixing matrix to another.

    \begin{theorem}\label{thm:nonuniq}
    Suppose $S$ is a $d \times d'$ stoichiometric matrix corresponding to a CRN
    with $n$ pairwise nonequivalent bad submatrices. If $\sigma, \tau \in \sym_n$ then
    $$ \widehat S_{\sigma}= \begin{bmatrix}
                I_d & 0 \\ 0 & P
                \end{bmatrix}
                \widehat S_\tau \begin{bmatrix}
                                I_{d'} & 0 \\ 0 & P
                                                \end{bmatrix}^t
                                                        ,$$
    where $P$ is the $n \times n$ permutation matrix associated to $\tau^{-1}\sigma\in\sym_n$.
    \end{theorem}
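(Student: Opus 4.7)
The plan is to establish the theorem by showing that the individual sign fixing steps for inequivalent bad submatrices essentially commute, so that the outcome depends only on \emph{which} equivalence classes in $\cB$ receive their own new row and column, not on the order in which the steps are performed. First I would record precisely what one sign fixing step does: if the bad submatrix $b_k$ has its unique positive entry at position $(i_k,j_k)$ of the current matrix, then the step (i) replaces that single entry by $0$; (ii) appends a new row supported on the old column $j_k$ (value $+1$) and on the simultaneously appended new column (value $-1$); and (iii) appends a new column supported on row $i_k$ (value equal to the former $S_{i_k,j_k}$) and on the new row (value $-1$). In particular, only one pre-existing entry is ever altered, and the positive entries marking inequivalent bad submatrices live at distinct positions.

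The heart of the argument is a commutativity statement: if $b_k$ and $b_\ell$ are in distinct equivalence classes of $\cB$, then the steps fixing them commute. Two facts must be checked. \emph{Fact (a).} Fixing $b_k$ does not touch any of the four entries of $b_\ell$, since the positive entry of $b_\ell$ sits at a different position and the only modification is to $S_{i_k,j_k}$. Hence $b_\ell$ remains a bad submatrix of the intermediate matrix, with the same positive value, and the subsequent step fixing $b_\ell$ proceeds exactly as it would have directly on $S$. \emph{Fact (b).} Fixing $b_k$ introduces no new bad submatrices in the extended matrix. Any hypothetical new $2\times 2$ bad submatrix would have to use the newly appended row or column; but the new row has only two nonzero entries ($+1$ in column $j_k$, $-1$ in the new column) and the new column only two nonzero entries ($+$ in row $i_k$, $-1$ in the new row), and a short case analysis shows none of the resulting $2\times 2$ sub-patterns matches the forbidden pattern \eqref{eq:2cycle3m}. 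Together these two facts imply the steps for $b_k$ and $b_\ell$ commute, the only difference being the order in which the two new rows and columns are appended.

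An induction on $n=|\cB|$ then yields the following canonical description of $\widehat S_\sigma$ for any $\sigma\in\sym_n$: its first $d$ rows and $d'$ columns are obtained from $S$ by zeroing out the positive entries of the $n$ classes, and for each $j$, the $(d+j)$-th appended row together with the $(d'+j)$-th appended column carry precisely the sign fixing data associated to $b_{\sigma(j)}$. Comparing $\widehat S_\sigma$ with $\widehat S_\tau$ now reduces to relabeling the appended block: the data belonging to class $b_{\sigma(j)}$ sits at appended position $j$ in $\widehat S_\sigma$ and at appended position $\tau^{-1}(\sigma(j))=(\tau^{-1}\sigma)(j)$ in $\widehat S_\tau$. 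Therefore simultaneous conjugation of $\widehat S_\tau$ by $\begin{bmatrix}I_d&0\\0&P\end{bmatrix}$ on the left and $\begin{bmatrix}I_{d'}&0\\0&P\end{bmatrix}^t$ on the right, with $P$ the permutation matrix of $\tau^{-1}\sigma$, reproduces $\widehat S_\sigma$. The main obstacle is Fact (b): a priori one must worry that a freshly appended row or column, combined with some existing one, introduces a new forbidden pattern and thereby branches the algorithm in a way that destroys the clean permutation equivalence; however the extremely restricted shape of the appended row and column (two nonzero entries each, with a fixed sign) makes the exhaustive check tractable.
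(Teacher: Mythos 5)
Your proposal is correct and follows essentially the same route as the paper: both arguments rest on the observation that every $\widehat S_\sigma$ has the canonical block form $\left[\begin{smallmatrix} C & M_\sigma \\ N_\sigma & -I_n\end{smallmatrix}\right]$, with the appended rows and columns carrying the per-class sign-fixing data in the order prescribed by $\sigma$, after which the identity reduces to the bookkeeping that conjugation by $P$ permutes those rows and columns according to $\tau^{-1}\sigma$. The only difference is that you explicitly justify this block form via the commutativity of inequivalent sign-fixing steps (your Facts (a) and (b)), a point the paper asserts with ``by construction'' and in Remark \ref{rem:sflessbad}; that is a welcome but not essentially different addition.
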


Let $C$ be the matrix obtained by substituting
each of the $n$ positive entries of $S$ corresponding to the $n$ bad submatrices by $0$. By construction, each sign fixing matrix of $S$ will have the form
$$S_{\sigma}=\begin{bmatrix}
    C & M_{\sigma} \\
    N_{\sigma} &  -I_n
    \end{bmatrix}, $$
    where $N_{\sigma}$ is a $n \times d'$ matrix whose rows correspond to the bad submatrices in the order determined by $\sigma$, and $M_{\sigma}$ is a $d \times n$ matrix whose columns correspond to the bad submatrices in the order determined by $\sigma$.
More precisely, the $i$th row of $N_{\sigma}$ is the unit vector
$e_j'$ of length $d'$ if the bad submatrix $b_{\sigma(i)}$ has a positive
entry in column $j$ of $S$.
Similarly, the $i$th column of $M_{\sigma}$ is a multiple of the unit vector
$e_j$ of length $d$ if the bad submatrix $b_{\sigma(i)}$ has a positive
entry in row $j$ of $S$. The multiple is the value of $S$ at this positive
entry.

\begin{proof}[Proof of Theorem {\rm\ref{thm:nonuniq}}]
Note that
$$
\begin{bmatrix}
                I & 0 \\ 0 & P
                                \end{bmatrix}
                                                \widehat S_\tau \begin{bmatrix}
                                                                                I & 0 \\ 0 & P
                                                                                                                                \end{bmatrix}^t=
                                                                                                                            \begin{bmatrix}
                                                    C & M_{\tau}P^t \\
                                                        P N_{\tau} &  -I_n
                                                            \end{bmatrix},$$
so we only need to prove $M_{\tau}P^t=M_{\sigma}$ and $PN_{\tau}=N_{\sigma}$
for the given $P$.

Suppose $\mathcal{B}=\{b_1,\dots,b_n\}$, and $\eps \in \sym_n$ is the identity permutation. With the notation above, let
$$N_\eps =\begin{bmatrix}
    \alpha_1  \\
    \vdots \\
     \alpha_n
    \end{bmatrix}\quad \text{and} \quad
    M_\eps=\begin{bmatrix}
    \beta_1 & \cdots & \beta_n
    \end{bmatrix},$$
    where $\alpha_i$, $i=1,\ldots,n$ are the rows of $N_\eps$, and $\beta_i$, $i=1,\ldots,n$ are the columns of $M_\eps$. Given $\sigma \in \sym_n$, we have
    $$N_{\sigma} =\begin{bmatrix}
    \alpha_{\sigma(1)}  \\
    \vdots \\
     \alpha_{\sigma(n)}
      \end{bmatrix}\quad \text{and} \quad
    M_{\sigma}=\begin{bmatrix}
    \beta_{\sigma(1)} & \cdots & \beta_{\sigma(n)}
    \end{bmatrix}.$$

The $n \times n$ permutation matrix associated to $\tau^{-1}\sigma$ is
$$P=\begin{bmatrix}
    e_{(\tau^{-1}\sigma)(1)}  \\
    \vdots \\
     e_{(\tau^{-1}\sigma)(n)}
\end{bmatrix},$$
where $e_i$ denotes the unit vector of length $n$ with a
one in the $i$th coordinate and $0$'s elsewhere.
    By construction, $PN_{\tau}$ is a matrix whose $i$th row is the
    $ (\tau^{-1}\sigma)(i)$th row of $N_{\tau}$,
    so the $i$th row of $PN_{\tau}$ is $\alpha_{\tau((\tau^{-1}\sigma)(i))}= \alpha_{\sigma(i)}$. Hence $N_{\sigma}= PN_{\tau}$ as desired.

To conclude the proof let us verify $M_{\sigma} = M_{\tau}P^t$. Notice that $M_{\tau}P^t$ is a matrix whose $i$th column is the $(\tau^{-1}\sigma)(i)$th column of $M_{\tau}$, so the $i$th column of $M_{\tau}P^t$ is $\beta_{\tau((\tau^{-1}\sigma)(i))}= \beta_{\sigma(i)}$. This implies $M_{\sigma} = M_{\tau}P^t$.
\end{proof}

\section{Equilibria behave well under sign fixing}\label{sec:eq}

Our next goal is to analyze how equilibria for the original
CRN
compare to equilibria for a sign fixed CRN.
We shall find
that the equilibria are in perfect correspondence.
The key to this is a simple fact in linear algebra
which constitutes the next subsection.

\subsection{Linear algebra associated to sign fixing}

We now show how
 the nullspace of $S$ and the nullspace of $\check S$
 are related.
 Likewise for
 the range of $S$ vs.~the range of $\check S$.

\begin{prop}\label{prop:ker}
Let $S$ be a stoichiometric matrix with a bad submatrix.
Let $\check S$ be obtained from $S$ by applying the
sign fixing algorithm to eliminate this bad submatrix.
Then $\dim\ker S=\dim\ker\check S$ and $\dim\ker S^t=\dim\ker\check S^t$.

Indeed, there is a precise correspondence:
 given $v\in\ker S$ there exists a unique $v_\infty\in\RR$ with
$\check v=\begin{bmatrix} v^t & v_\infty\end{bmatrix}^t
\in\ker\check S$. Conversely, for  $\check v
=\begin{bmatrix} v^t & v_\infty\end{bmatrix}^t
\in\ker\check S$, one has $v\in \ker S$.
A similar statement holds for the left kernels.
Furthermore, under this correspondence $v$ has positive
$($resp.~nonnegative$)$ entries if and only if $\check v$ does. 
\end{prop}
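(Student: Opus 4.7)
The plan is to exhibit $\check S$ in a clean block form relative to $S$ and then verify both claims by a direct block-matrix calculation. Suppose the bad submatrix sits in the rows indexed by species $A,B$ and in the columns indexed by reactions \eqref{eq:bad1}, \eqref{eq:bad2}, with the sole positive entry $S_{B,j_0}=p_2$, where $j_0$ indexes \eqref{eq:bad1}. Writing $m=p_2\,e_B\in\RR^d$ and $n=e_{j_0}\in\RR^{d'}$ for the standard unit vectors, the sign-fixing step produces
$$\check S=\begin{bmatrix} S-mn^t & m \\ n^t & -1\end{bmatrix}.$$
Indeed, the term $-mn^t$ zeros out the $(B,j_0)$ entry (this is the change from \eqref{eq:bad1} to \eqref{eq:bad1'}), the new column $\begin{bmatrix}m\\ -1\end{bmatrix}$ records the reaction \eqref{eq:bad2'}, and the new row $\begin{bmatrix}n^t & -1\end{bmatrix}$ records the balance for the new species $B'$.

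For $\check v=\begin{bmatrix}v^t & v_\infty\end{bmatrix}^t$, a direct expansion gives
$$\check S\check v=\begin{bmatrix} Sv-m(n^t v-v_\infty)\\ n^t v-v_\infty\end{bmatrix}.$$
Hence $\check v\in\ker\check S$ forces $v_\infty=n^t v$ from the bottom block, and the top block then collapses to $Sv=0$. Conversely, for $v\in\ker S$ the choice $v_\infty:=n^t v$ is the unique scalar placing $\check v$ in $\ker\check S$. This establishes the claimed bijection $\ker S\leftrightarrow\ker\check S$ and hence $\dim\ker S=\dim\ker\check S$. An entirely symmetric computation with $\check u^t=\begin{bmatrix}u^t & u_\infty\end{bmatrix}$ forces $u_\infty=u^t m=p_2 u_B$ from one block and reduces the other to $u^t S=0$, giving the analogous bijection for left kernels and the equality $\dim\ker S^t=\dim\ker\check S^t$.

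The sign assertion then falls out immediately. Under the right-kernel correspondence $v_\infty=n^t v=v_{j_0}$ is just one coordinate of $v$, so $\check v$ is positive (resp.~nonnegative) precisely when $v$ is. For the left kernel, $u_\infty=p_2 u_B$ with $p_2>0$ gives the same equivalence. The only point needing care is nailing down the block decomposition of $\check S$; once that is in hand, every remaining step is a one-line identity, so I do not foresee any real obstacle.
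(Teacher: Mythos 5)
Your proof is correct and follows essentially the same route as the paper: you identify the precise structure of $\check S$ relative to $S$ (the paper does this with a picture, you with the equivalent rank-one block form $\bigl[\begin{smallmatrix} S-mn^t & m\\ n^t & -1\end{smallmatrix}\bigr]$) and then read off $v_\infty=n^tv$ and $Sv=0$ from the two blocks. The only difference is that you write out the left-kernel computation explicitly, which the paper leaves as an exercise.
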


\begin{proof}
Let $p$ and $q$ be the rows of $S$,
and let $k$ and $\ell$ be the columns of
$S$ corresponding to the bad submatrix.
Without loss of generality assume that
the bad submatrix has the same sign pattern as
$\left[\begin{smallmatrix}
     -1 & -1\\
          -1 & +1
          \end{smallmatrix}\right]$.
For the sake of exposition, here is a picture:\\
\bigskip

\setlength{\unitlength}{1cm}
\begin{picture}(3,3)(-1.4,0)
\put(0,0){\line(1,0){3}}
\put(0,0){\line(0,1){3}}
\put(3,0){\line(0,1){3}}
\put(0,3){\line(1,0){3}}
\put(-0.3,2){$p$}
\put(-0.3,1){$q$}
\put(1,3.1){$k$}
\put(2,3.1){$\ell$}
\put(1.8,1){$S_{q,\ell}$}
\put(-1.5,1.5){$S=$}
\put(4.1,1.5){$\Rightarrow$}
\put(5.4,1.5){$\check S=$}
\put(7,0){\line(1,0){3.7}}
\put(7,-0.7){\line(0,1){3.7}}
\put(10,-0.7){\line(0,1){3.7}}
\put(7,3){\line(1,0){3.7}}
\put(6.7,2){$p$}
\put(6.7,1){$q$}
\put(6.05, -0.5){$d+1$}
\put(8,3.1){$k$}
\put(9,3.1){$\ell$}
\put(10,3.1){\tiny{$d'+1$}}
\put(8.9,1){$0$}
\put(8.9,-.47){$1$}
\put(10.1,-.47){$-1$}
\put(7,-0.7){\line(1,0){3.7}}
\put(10.7,-0.7){\line(0,1){3.7}}
\put(10.05,1){$S_{q,\ell}$}
\put(7.1,-.47){$0$}
\put(8,-.47){$0$}
\put(9.7,-.47){$0$}
\put(10.25,0.15){$0$}
\put(10.25,1.9){$0$}
\put(10.25,2.6){$0$}
\end{picture}
\vspace{.9cm}

Suppose $S\in\RR^{d\times d'}$ and let
$v\in\ker S$. Then clearly
$\check v=\begin{bmatrix} v^t & v_\ell\end{bmatrix}^t
\in\ker\check S$. Moreover, if $v\in\RR_{>0}^{d'}$, then
$\check v\in\RR_{>0}^{d'+1}$.
Conversely, every
$\check v
=\begin{bmatrix} v^t & v_\infty\end{bmatrix}^t \in\ker\check S$
satisfies $\check v_\ell=v_\infty$ and so gives rise
to $v\in\ker S$.
Again, positivity is preserved.
The corresponding analogous statements and proofs for
the left kernel are left as
an exercise for the reader.
\end{proof}

\begin{rem}
\label{rem:sflessbad}
After applying one step of the sign fixing algorithm
the number of bad submatrices decreases.
More precisely, let $S$ be a stoichiometric matrix with a bad
submatrix. Let $\check S$ be obtained from $S$ by applying the
sign fixing algorithm to eliminate this bad submatrix.
Then the number of bad submatrices in $\check S$ is less than the
number of those in $S$.

Indeed, the form of $\check S$,
because the added row
and column have all entries but two equal to zero,
guarantees $\check S$ does not contain  the one bad submatrix under attack,
and at the same time no new bad submatrices have been added.
Thus we have  reduced the number of bad submatrices by at least one.\qed
\end{rem}

\begin{rem}
The
sign fixing algorithm for the situation of the hermitian square
$AA^t$
of a sign pattern $A$,
goes just as in \S \ref{sec:algor}
with $A$ replacing $S$ in the picture in
the proof of Proposition \ref{prop:ker}.

Details are
left as an exercise for the interested reader. \qed
\end{rem}

\subsection{Behavior of equilibria and steady states in mass action kinetics
under sign fixing}
\label{sec:equilib}

This subsection uses the linear algebra result of the previous
subsection to show
that the equilibria of a CRN and of its sign fixed CRN are in perfect
correspondence.
We shall show this for mass action kinetics,
although as one will see from the
arguments here it works for a much more general class of CRNs.

We now review mass action kinetics with the primary aim of introducing
our notation.
The postulate of mass action kinetics is
\emph{``the reaction rate is proportional to reactant concentrations''}.
For instance, for the chemical reaction
$$
2A+B\to 4C
$$
the reaction rate is $k_{\scriptscriptstyle 2A+B\to 4C} x_A^2 x_B,$
where $x$ denotes the {\bf concentration} of a species and
$k_{\scriptscriptstyle 2A+B\to 4C}>0$ is the {\bf rate constant}.
The corresponding ODE is
\begin{equation*}
\begin{bmatrix}
\dot x_A \\
\dot x_B \\
\dot x_C
\end{bmatrix} =
\begin{bmatrix}
 -2 k_{\scriptscriptstyle 2A+B\to 4C} x_A^2 x_B\\
  - k_{\scriptscriptstyle 2A+B\to 4C} x_A^2 x_B \\
   4 k_{\scriptscriptstyle 2A+B\to 4C} x_A^2 x_B.
   \end{bmatrix} =
   \begin{bmatrix}
   -2\\ -1\\4
   \end{bmatrix}
   \begin{bmatrix}
   k_{\scriptscriptstyle 2A+B\to 4C} x_A^2 x_B
   \end{bmatrix}.
   \end{equation*}
In general, for $S\in\RR^{d\times d'}$ the flux vector $v(x)$ is
given by
$$
v(x)_i=k_i \prod_{j=1}^{d} x_j^{- \min \{0,S_{ji}\}}   ,\quad i=1,\ldots,d'.
$$
(Here $k_i>0$ is the rate constant associated to the $i$th reaction
and $x_j$ is the concentration of the $j$th species.)

If the ODE \eqref{eq:de} admits a positive vector in the left kernel
(i.e., there exists
$m\in \RR_{>0}^{d}$ with $m\cdot\dot x= m\cdot f(x)=0$), then
the ODE is called {\bf conserving}.
This reflects quantities (like the mass or
the number of carbon atoms) being conserved.
An obvious sufficient condition for ODEs of the form \eqref{eq:chemde}
is $\ker S^t\cap \RR_{>0}^{d}\neq\{0\}$.
If this is satisfied, we say that $S$ is {\bf conserving}.
By Proposition \ref{prop:ker} this condition is preserved under the
sign fixing algorithm.

\begin{cor}\label{cor:eq}
Let $S$ be a stoichiometric matrix, and suppose $S$ has a bad submatrix.
Let $\check S$ be obtained from $S$ by applying the
sign fixing algorithm to eliminate this bad submatrix.
If $S$ is conserving, then so is $\check S$.
Moreover, the reaction form
differential equations \eqref{eq:chemde} associated
under mass action kinetics
to $S$ and to $\check{S}$,
respectively, have the
same
equilibria in the following sense.

Suppose $S\in\RR^{d\times d'}$. If
$\check x=\begin{bmatrix} x^t &x_\infty\end{bmatrix}^t\in\RR_{>0}^{d+1}$
$($resp.~$\check x\in\RR_{\geq0}^{d+1})$
satisfies $\check S \check v(\check x)=0$, then
$S v(x)=0$. Conversely, if $x\in\RR_{>0}^d$ $($resp.~$x\in\RR_{\geq0}^d)$ 
satisfies $S v(x)=0$, then
there exists a unique $x_\infty\in\RR_{>0}$ $($resp.~$x_\infty\in\RR_{\geq0})$ 
with
$\check S \check v(\check x)=0$ for
$\check x=\begin{bmatrix} x^t &x_\infty\end{bmatrix}^t$.
$($Here  $\check v$ will be used to denote a
flux vector associated
under mass action kinetics
to $\check S$.$)$
\end{cor}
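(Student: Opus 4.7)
The plan is to exploit the transparent structure of $\check S$ --- one entry modified, one row and one column adjoined, all with very simple entries --- to match up the mass-action equations for $S$ and $\check S$ row by row. The conservation half follows immediately from Proposition \ref{prop:ker} applied to the transpose $S^t$: a strictly positive left nullvector of $S$ extends uniquely to a strictly positive left nullvector of $\check S$, so $S$ conserving forces $\check S$ conserving.

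For the equilibria, I adopt the notation of the proof of Proposition \ref{prop:ker} and let $(q,\ell)$ be the position of the positive entry $S_{q,\ell}>0$ of the bad submatrix being repaired. The first observation concerns the fluxes: column $\ell$ of $\check S$ agrees with column $\ell$ of $S$ except that $S_{q,\ell}$ is replaced by $0$ and a new $+1$ is placed in the new row $d+1$; hence the negative entries of column $\ell$ are unchanged and, since mass-action monomials depend only on reactants, $\check v_\ell(\check x)=v_\ell(x)$ (with the same $k_\ell$). For $j\le d'$, $j\ne\ell$, column $j$ of $\check S$ is column $j$ of $S$ with a trailing zero, so $\check v_j(\check x)=v_j(x)$ as well. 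The only genuinely new flux is $\check v_{d'+1}(\check x)=k_{d'+1}x_\infty$, because column $d'+1$ has exactly one negative entry $-1$, in row $d+1$. Now I compute $\check S\check v(\check x)$ row by row. For $i\notin\{q,d+1\}$ the modifications miss row $i$, so $(\check S\check v(\check x))_i=(Sv(x))_i$. The new row $d+1$ reads $v_\ell(x)-k_{d'+1}x_\infty$. In row $q$ the entry $S_{q,\ell}$ has been deleted from column $\ell$ and reinserted in column $d'+1$, giving $(\check S\check v(\check x))_q=(Sv(x))_q+S_{q,\ell}\bigl(k_{d'+1}x_\infty-v_\ell(x)\bigr)$.

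The required bijection is then immediate: $\check S\check v(\check x)=0$ is equivalent to the pair of conditions $Sv(x)=0$ and $k_{d'+1}x_\infty=v_\ell(x)$. In one direction, the row $d+1$ equation forces $x_\infty=v_\ell(x)/k_{d'+1}$, which kills the correction in row $q$ and leaves $Sv(x)=0$; in the other direction, given $x$ with $Sv(x)=0$ the value $x_\infty:=v_\ell(x)/k_{d'+1}$ is evidently the unique choice yielding $\check S\check v(\check x)=0$. Positivity is automatic: for $x\in\RR_{>0}^d$ the monomial $v_\ell(x)$ is strictly positive, so $x_\infty>0$; for $x\in\RR_{\geq 0}^d$ one has $v_\ell(x)\ge 0$ and hence $x_\infty\ge 0$. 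The only place where real care is needed --- the main (and essentially only) obstacle --- is the row $q$ bookkeeping: one must see that the ``missing'' contribution $S_{q,\ell}v_\ell(x)$ is recovered at equilibrium exactly by the new column, and this works only because mass-action fluxes ignore positive stoichiometric entries. I would also remark that the choice of $k_{d'+1}>0$ is free, which is why the correspondence is well-defined and unique on the nose.
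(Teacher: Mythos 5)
Your proposal is correct and follows essentially the same route as the paper: both arguments note that the first $d'$ fluxes are unchanged (since only the positive entry $S_{q,\ell}$ is moved and mass-action monomials see only reactants), use the new row to force $\check v_{d'+1}(\check x)=v_\ell(x)$ and hence solve uniquely for $x_\infty$, and then check that row $q$ recovers $(Sv(x))_q=0$. Your packaging of the row-$q$ computation as the explicit identity $(\check S\check v(\check x))_q=(Sv(x))_q+S_{q,\ell}\bigl(k_{d'+1}x_\infty-v_\ell(x)\bigr)$ is a slightly cleaner way to get both directions at once, but it is the same calculation the paper performs.
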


\begin{proof}
This is essentially a consequence of Proposition \ref{prop:ker}
and the figure contained in its proof describes the notation we now
use.
Let $p$ and $q$ be the rows of $S$,
and let $k$ and $\ell$ be the columns
of $S$ corresponding to the bad submatrix,
and assume without loss of generality
$S_{q\ell}>0$.

If $\check v(\check x)\in \ker\check S\cap\RR_{>0}^{d'+1}$ and
$\check x=\begin{bmatrix} x^t &x_\infty\end{bmatrix}^t$, then
(by construction)
the first $d'$ entries of $\check v(\check x)$ coincide with $v(x)$,
that is,
$$\check v(\check x)_{i}=v(x)_i,\quad i=1,\ldots,d'. $$
Additionally,
\begin{equation}\label{eq:addit}
0=\dot{\check x}_{d+1}
= \check v(\check x)_{\ell}- \check v(\check x)_{d'+1}
\end{equation}
so
we obtain
\begin{equation}\label{eq:addit2}
\check v(\check x)_{d'+1}=\check v(\check x)_{\ell}=v(x)_\ell.
\end{equation}
Note that by construction, $\check v(\check x)_{d'+1}$ depends only on
$x_\infty$ and thus we can solve \eqref{eq:addit2} for $x_\infty$ uniquely.
Hence
\begin{align*}
0 & = \dot{\check x}_{q}= \sum_{i=1}^{d'+1}
 \check S_{qi} \check v(\check x)_i
 =
\sum_{\dd{i=1}{i\neq\ell}}^{d'} S_{qi} v(x)_i
+ S_{q\ell} \check v(\check x)_{d'+1} \\
&=
\sum_{\dd{i=1}{i\neq \ell} }^{d'} S_{qi} v(x)_i + S_{q\ell} v(x)_\ell=
\sum_{i=1}^{d'} S_{qi} v(x)_i.
\end{align*}
For $s\neq q$,
$$
0=  \dot{\check x}_{s}=\sum_{i=1}^{d'+1} \check S_{si} \check v(\check x)_i
 =
 \sum_{i=1}^{d'}  S_{si} v(x)_i
$$
proving $Sv(x)=0$.
(Alternatively, the conclusion can be reached by the proof of
Proposition \ref{prop:ker}.)
The calculation above reverses to show
that converses of these implications hold as well.
\end{proof}

\begin{theorem}\label{thm:eq}
Let $S$ be a stoichiometric matrix corresponding to
a chemical reaction network.
Then the reaction form differential equations corresponding to $S$
and to its sign fixing
matrix $\wh S$
under mass action kinetics
have the  equilibria which are equivalent under the correspondence in
Corollary {\rm\ref{cor:eq}}.
\end{theorem}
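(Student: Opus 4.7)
The plan is to prove the theorem by induction on the number $n$ of equivalence classes of bad submatrices of $S$, equivalently the minimal number of one-step applications of the sign fixing algorithm needed to build $\wh S$. The base case $n=0$ gives $\wh S = S$ and there is nothing to prove. For the inductive step, write the chain produced by the algorithm as $S = S_0, S_1, \ldots, S_n = \wh S$, where each $S_{j+1}$ is obtained from $S_j$ by a single sign fixing step eliminating one bad submatrix. By Remark \ref{rem:sflessbad}, $S_1$ has strictly fewer bad submatrices than $S$, so the inductive hypothesis furnishes a bijection between the equilibria of the reaction form ODE for $S_1$ and those for $\wh S$. Corollary \ref{cor:eq} applied to the single step $S_0 \to S_1$ supplies the missing bijection at the first stage; composing these two yields the desired bijection between equilibria of $S$ and $\wh S$.

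Concretely, an equilibrium $x \in \RR^d_{>0}$ (resp.~$\RR^d_{\geq 0}$) of $Sv(x)=0$ is extended, one coordinate at a time, to a vector $\wh x = \begin{bmatrix} x^t & x_{\infty,1} & \cdots & x_{\infty,n}\end{bmatrix}^t$ which is an equilibrium for $\wh S\wh v(\wh x)=0$. At the $j$th stage the new coordinate $x_{\infty,j}$ is uniquely determined by the relation \eqref{eq:addit2}, because under mass action kinetics the newly added flux for reaction \eqref{eq:bad2'} depends monomially only on the newly added species. The reverse direction is simply truncation of the last $n$ coordinates, and positivity (resp.~nonnegativity) is preserved at each step by Corollary \ref{cor:eq}, hence in the composition.

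The main --- and essentially only --- obstacle is bookkeeping. One must verify that at each intermediate stage the flux vector associated to $S_j$ is really the mass action flux for $S_j$ (so that Corollary \ref{cor:eq} applies verbatim to the next step), and that the single-step correspondences compose to a single well-defined bijection. Both points are immediate from the explicit shape of the modifications made by the sign fixing algorithm: the added row has exactly two nonzero entries, the added column has exactly two nonzero entries, and the new species appears only in the new reaction, so the new rate constant and the existing concentrations alone pin down $x_{\infty,j}$. Finally, Theorem \ref{thm:nonuniq} ensures that the resulting bijection is, up to the obvious permutation of the added coordinates, independent of the order in which the bad submatrices are processed, so the statement is intrinsic to $S$ and any of its sign fixing matrices $\wh S$.
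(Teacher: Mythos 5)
Your proposal is correct and follows essentially the same route as the paper, which proves the theorem by induction on the number of bad submatrices using Corollary \ref{cor:eq} for each single step and Remark \ref{rem:sflessbad} to guarantee the induction terminates. The extra bookkeeping you supply (the explicit coordinate-by-coordinate extension and the order-independence via Theorem \ref{thm:nonuniq}) is consistent with, and a harmless elaboration of, the paper's one-line argument.
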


\begin{proof}
This follows easily from Corollary \ref{cor:eq}
and Remark \ref{rem:sflessbad} by an induction
on the number of bad submatrices of $S$.
\end{proof}

\begin{rem}
Theorem \ref{thm:eq} and Corollary \ref{cor:eq} extend
to more general, reaction form ODEs \eqref{eq:chemde}
with  monotone fluxes.
In one step of the algorithm the key is to add a reaction consuming
exactly one (new) species  (variable) $x_\infty$.
Since this is an artificial
reaction we can  specify a flux $\check v(\check x)_{d'+1}$
and the key is to pick it to be monotone and
\emph{surjective}, e.g.~it depends only on $x_\infty$ and is linear.
This  ensures the solvability of
\eqref{eq:addit2} for $x_\infty$.
The uniqueness of $x_\infty$ is then guaranteed by the monotone property.
Under these assumptions both proofs work verbatim.\qed
\end{rem}

\subsection{Local stability is preserved by sign fixing}

In the previous subsection we showed that the equilibria
of the original CRN sit in a perfect correspondence with
those of the sign fixed CRN.
An important question is whether or not stability of an
equilibrium of the original CRN implies stability of the
corresponding equilibrium of the sign fixed CRN.
This question is open to interpretation
because the sign fixing CRN contains a rate constant which we
are allowed to define. Let us call this rate constant $k$.
A natural version of the question would be: is there an \emph{a priori}
choice of $k$ such that the equilibrium of the original CRN is stable
if and only if the corresponding equilibrium is stable for the sign fixed
CRN.
While we have not analyzed global stability, we have analyzed and answered
the question for \emph{local} asymptotic stability.
We found that if we choose $k$ large enough, then
one of the eigenvalues of the sign fixed Jacobian will be very negative,
and all the others will be close to the eigenvalues of the Jacobian of
the original CRN.
Recall that a matrix is said to be {\bf stable} if all its eigenvalues
have negative real part.
An
equilibrium $x_0\in\RR^{d}_{\geq 0}$
of an ODE of the form \eqref{eq:de} is {\bf locally asymptotically stable} if the matrix $f'(x_0)$ is stable.

As before, we assume mass action kinetics although
this assumption can be weakened to reaction form ODEs \eqref{eq:chemde}
with  monotone entrywise surjective fluxes.

\begin{theorem}\label{thm:lstab1}
Let $S$ be a $d\times d'$ stoichiometric matrix with a bad submatrix.
Let $\check S$ be obtained from $S$ by applying the
sign fixing algorithm to eliminate this bad submatrix.
Write $J(x)=Sv'(x)$ and $\check J_k(\check x)=\check S \check v'(\check x)$.
Here $k$ denotes the rate constant assigned to the additional reaction
created in the sign fixing algorithm. Fix a point $\check x\in\RR_{\geq 0}^{d+1}$ and let $x\in\RR_{\geq 0}^d$
denote its first $d$ components. Furthermore, let $J=J(x)$ and
$\check J_k=\check J_k(\check x)$.

Then $d$ of the eigenvalues of the $(d+1)\times (d+1)$ Jacobian matrix
$\check J_k$
$($counting multiplicity$)$
converge $($as $k\to\infty)$
to the $d$ eigenvalues of $J$ and
the remaining eigenvalue is real and converges to $-\infty$.
\end{theorem}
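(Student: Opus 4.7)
The plan is to derive an explicit formula relating the characteristic polynomial $\chi_k(\lambda):=\det(\lambda I-\check J_k)$ of the sign fixed Jacobian to $\chi_J(\lambda):=\det(\lambda I-J)$, and then read off the asymptotic behaviour of the eigenvalues as $k\to\infty$.

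First I would unpack the block structure of $\check J_k$. Since the new reaction is $B'\to S_{q\ell}B$, mass action gives $\check v_{d'+1}(\check x)=k\,\check x_{d+1}$, a function of $\check x_{d+1}$ alone, while no original flux depends on the new species. Writing $V:=v'(x)$, this means
\begin{equation*}
\check v'(\check x)=\begin{bmatrix} V & 0 \\ 0 & k \end{bmatrix},\qquad
\check S=\begin{bmatrix} S' & c \\ e_\ell^t & -1 \end{bmatrix},
\end{equation*}
where $S'$ is $S$ with its positive entry $S_{q\ell}$ zeroed out and $c:=S_{q\ell}e_q$. Multiplying yields
\begin{equation*}
\check J_k=\begin{bmatrix} S'V & kc \\ e_\ell^t V & -k \end{bmatrix},
\end{equation*}
and crucially $J=SV=S'V+c\,(e_\ell^t V)$: the full Jacobian $J$ differs from the top-left block of $\check J_k$ by exactly one rank one matrix.

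Next I would compute $\chi_k(\lambda)$ using the Schur complement formula for the scalar lower-right block $\lambda+k$ and then the matrix determinant lemma applied to the rank one correction $\tfrac{k}{\lambda+k}\,c\,(e_\ell^t V)$. Setting $p(\lambda):=\det(\lambda I-S'V)$ and $q(\lambda):=e_\ell^t V\,\operatorname{adj}(\lambda I-S'V)\,c$, this gives
\begin{equation*}
\chi_k(\lambda)\;=\;(\lambda+k)\,p(\lambda)-k\,q(\lambda)\;=\;\lambda\,p(\lambda)+k\,\chi_J(\lambda),
\end{equation*}
where the second equality uses $p(\lambda)-q(\lambda)=\det(\lambda I-S'V-c\,e_\ell^t V)=\chi_J(\lambda)$, an immediate consequence of the rank one relation from the previous step. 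This identity is the heart of the argument.

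Finally, the asymptotics. Dividing by $k$ gives $\chi_k(\lambda)/k=\chi_J(\lambda)+\lambda p(\lambda)/k$, which converges uniformly on compact subsets of $\mathbb{C}$ to $\chi_J(\lambda)$ as $k\to\infty$. Continuity of polynomial roots (e.g.\ via Rouch\'e) then forces $d$ roots of $\chi_k$ to converge, counting multiplicity, to the $d$ eigenvalues of $J$. For the remaining root, $\tr(\check J_k)=\tr(S'V)-k$ is the sum of all $d+1$ eigenvalues; since $d$ of them stay bounded, the last one must diverge to $-\infty$. Realness follows because $\chi_k$ has real coefficients: a diverging non-real root would force its conjugate to diverge too, contradicting that only one eigenvalue is unbounded. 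The main obstacle, once the block bookkeeping of the setup is done, is the Schur complement/matrix determinant manipulation that produces the clean identity $\chi_k(\lambda)=\lambda p(\lambda)+k\chi_J(\lambda)$; after that the eigenvalue analysis is soft.
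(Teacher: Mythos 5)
Your argument is correct and matches the paper's proof in all essentials: you derive the same characteristic-polynomial identity (the paper's $c_k=(-k-\lambda)c+s\lambda h$ is your $\chi_k(\lambda)=\lambda p(\lambda)+k\chi_J(\lambda)$ up to the sign convention $\det(J-\lambda I)$ versus $\det(\lambda I-J)$), conclude locally uniform convergence of $\chi_k/k$, and invoke the argument principle to get $d$ convergent roots plus one escaping real root. The only point of divergence is at the very end, where the paper shows the escaping root tends to $-\infty$ rather than $+\infty$ by checking that $c_k(\lambda)$ keeps the sign of $-c(\lambda)$ for all large $\lambda>0$, while you use the trace identity $\operatorname{tr}\check J_k=\operatorname{tr}(S'V)-k$ together with boundedness of the other $d$ roots; both are valid and yours is arguably the cleaner bookkeeping.
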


Without loss of generality, we may assume the bad submatrix in
$S$ is the $2\times 2$ bottom right block and
$S_{d,d'}>0$.
Then the relationship between the $d\times d$ matrix
$J=Sv'(x)$ and the $(d+1)\times (d+1)$ matrix
$\check J_k=\check S\check v'(\check x)$
is as follows:
\begin{eqnarray*}
\check J_k & =& \begin{bmatrix}
J_{1,1}& \cdots & J_{1,d} & 0\\
\vdots & \ddots & \vdots & \vdots \\
J_{d-1,1}& \cdots & J_{d-1,d} & 0\\
J_{d,1}- S_{d,d'}\frac{\partial v_{d'}}{\partial x_1} & \cdots & J_{d,d}-
 S_{d,d'}\frac{\partial v_{d'}}{\partial x_d}& k S_{d,d'} \\
 \frac{\partial v_{d'}}{\partial x_1} & \cdots &
 \frac{\partial v_{d'}}{\partial x_d}& -k  
\end{bmatrix}
\\
&=&
\begin{bmatrix}
& && 0\\
& J && 0\\
&&&0 \\
0&0&0&0
\end{bmatrix}
+ \begin{bmatrix}
0\\
0\\
-S_{d,d'}\\
1
\end{bmatrix}
\begin{bmatrix}
\frac{\partial v_{d'}(x)}{\partial x_1} & \cdots
& \frac{\partial v_{d'}(x)}{\partial x_{d}} & -k
\end{bmatrix} 
.
\end{eqnarray*}

\def\CC{\mathbb C}

Let $c(\la)
=\det (J- \la I_d)\in\RR[\la]$ and $c_k(\la)
=\det (\check J_k-\la I_{d+1})\in\RR[\la]$
 denote the characteristic polynomials (in $\lambda$) of
$J$ and $\check J_k$, respectively.

\begin{lem}\label{lem:charPoly}
The degree $d+1$ polynomials $\frac{1}{k}c_k$ converge
uniformly on compact
subsets of $\CC$
to the degree $d$ polynomial $-c$.
\end{lem}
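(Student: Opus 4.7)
My plan is to compute $c_k(\lambda)$ explicitly by applying a single row operation that cleans up the structure of $\check J_k - \lambda I_{d+1}$, and then expand along the last column. The key observation is that after this reduction, all of the $k$-dependence gets packaged into the $(d+1,d+1)$ cofactor, which already factors out $c(\lambda)$.

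Concretely, I would add $S_{d,d'}$ times row $d{+}1$ to row $d$ of $\check J_k - \lambda I_{d+1}$. Since the last row of $\check J_k$ is $\bigl(\tfrac{\partial v_{d'}}{\partial x_1},\ldots,\tfrac{\partial v_{d'}}{\partial x_d},\,-k\bigr)$ while the $d$-th row contains the corrections $J_{d,j}-S_{d,d'}\tfrac{\partial v_{d'}}{\partial x_j}$ and the entry $kS_{d,d'}$, this operation exactly cancels both the $-S_{d,d'}\tfrac{\partial v_{d'}}{\partial x_j}$ corrections and the $kS_{d,d'}$ in the last column of row $d$. The resulting matrix has $J-\lambda I_d$ as its top-left $d\times d$ block and its last column equals $(0,\ldots,0,\,-\lambda S_{d,d'},\,-k-\lambda)^t$. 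Row operations preserve determinant, so $c_k(\lambda)$ equals the determinant of the new matrix.

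Next I expand along the last column. Only two entries contribute. The $(d{+}1,d{+}1)$ cofactor has sign $+1$ and its minor is precisely $J-\lambda I_d$, giving the contribution $(-k-\lambda)\,c(\lambda)$. The $(d,d{+}1)$ cofactor has sign $(-1)^{2d+1}=-1$ and its minor is the determinant $D(\lambda)$ of the $d\times d$ matrix obtained from $J-\lambda I_d$ by replacing its last row with $\bigl(\tfrac{\partial v_{d'}}{\partial x_1},\ldots,\tfrac{\partial v_{d'}}{\partial x_d}\bigr)$; this is a polynomial in $\lambda$ of degree at most $d-1$, independent of $k$. Combining,
\begin{equation*}
c_k(\lambda)=-(k+\lambda)\,c(\lambda)+\lambda S_{d,d'}\,D(\lambda).
\end{equation*}

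Dividing by $k$ gives $\tfrac{1}{k}c_k(\lambda)=-\bigl(1+\tfrac{\lambda}{k}\bigr)c(\lambda)+\tfrac{\lambda S_{d,d'}}{k}D(\lambda)$. On any compact $K\subset\CC$ the quantities $|\lambda|$, $|c(\lambda)|$, and $|D(\lambda)|$ are uniformly bounded, so both correction terms vanish uniformly on $K$ as $k\to\infty$, proving $\tfrac{1}{k}c_k\to -c$ uniformly on compact subsets. The degree statements are immediate since $c$ has degree $d$ and the $\tfrac{\lambda}{k}c(\lambda)$ term shows $\tfrac{1}{k}c_k$ has degree exactly $d+1$. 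There is no real obstacle here — the only care needed is the sign bookkeeping in the cofactor expansion and the verification that the row operation I chose exactly produces $J-\lambda I_d$ in the upper-left block.
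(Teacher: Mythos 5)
Your proof is correct and follows essentially the same route as the paper: the paper performs exactly the row operation you describe (adding $S_{d,d'}$ times the last row to row $d$, which turns the last column into $(0,\ldots,0,-S_{d,d'}\lambda,-k-\lambda)^t$), expands along the last column to obtain $c_k=(-k-\lambda)c+S_{d,d'}\lambda h$ with $h$ equal to your $D$, and then divides by $k$ and lets $k\to\infty$. Your sign bookkeeping in the cofactor expansion matches the paper's identity, so nothing is missing.
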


\begin{proof}
Let us consider $\check J_k - \la I$ and its determinant. For notational
convenience let us write $s=S_{d,d'}$ and $v_{d',j}=
\frac{\partial v_{d'}}{\partial x_j}$. Then
\begin{eqnarray*}
c_k=
\det (\check J_k - \la I) &=&
\det
\begin{bmatrix}
J_{1,1}-\la& \cdots & J_{1,d} & 0\\
\vdots & \ddots & \vdots & \vdots \\
J_{d-1,1}& \cdots & J_{d-1,d} & 0\\
J_{d,1}- s v_{d',1} & \cdots &J_{d,d}-s v_{d',d} -\la & k s\\
   v_{d',1} &  \cdots &
    v_{d',d}& -k -\la \\
   \end{bmatrix} \\
   &=&
   \det
\begin{bmatrix}
J_{1,1}-\la& \cdots & J_{1,d} & 0\\
\vdots & \ddots & \vdots & \vdots \\
J_{d-1,1}& \cdots & J_{d-1,d} & 0\\
J_{d,1} & \cdots &J_{d,d} -\la &  -s\la \\
   v_{d',1} &  \cdots &
      v_{d',d}& -k -\la \\
        \end{bmatrix}\\
        &=&
        (-k-\la) c + s \la \, 
\underbrace{
\det
\begin{bmatrix}
J_{1,1}-\la& \cdots& J_{1,d-1}  & J_{1,d} \\
\vdots & \ddots & \vdots & \vdots  \\
J_{d-1,1}& \cdots & J_{d-1,d-1}-\la & J_{d-1,d} \\
   v_{d',1} &  \cdots & v_{d',d-1} &
      v_{d',d} \\
        \end{bmatrix}}_h
\end{eqnarray*}
Thus
\beq
\label{eq:detJac}
\det (\check J_k - \la I)
= c_k
=
(-k-\la) c + s \la h
=
(-k-\la) \; \det J \ + \ s \la h.
\eeq
Note $h\in\RR[\la]$ is a polynomial of degree
$\leq d-1$ in $\la$ and does not contain $k$.
Thus
$$
\frac 1k c_k  = \  \frac {-k-\la}kc \ - \
\frac 1ks \la h \ \xrightarrow{k\to\infty} \ -c
$$
uniformly on compact subsets of $\CC$.
\end{proof}

In fact, the polynomial $h$ from the proof of Lemma \ref{lem:charPoly}
is of degree $\leq d-2$.
Since the ODEs are in
reaction form, $S_{d,d'}>0$ implies
$v_{d',d}=\frac{\partial v_{d'}}{\partial x_d} =0$,
cf.~\eqref{eq:vdepjacI}.

\begin{proof}[Proof of Theorem {\rm\ref{thm:lstab1}}]
Let $x_j^k, m_j^k$ denote the zeroes of $c_k$
together with their multiplicities and $x_j, m_j$ denote
the zeroes of $c$.
Certainly $c_k$ is analytic in the complex variable $\la$,
thus
$d$ zeroes of $c_k$ (counting multiplicity)
converge to the zeroes  of $c$.
This is a standard consequence of the argument principle,
since we can put a small circle $C_\eps $ around
a zero of $x_j$ and for large enough $k$
the winding number (with respect to $0$)
of  $\frac{1}{k} c_k $ on  $C_\eps $
equals that of $c$.
Thus  $c$ and $c_k $ have the same number of zeroes
inside  $C_\eps $.

Similarly, to analyze the point at infinity,
one can draw a circle $C_R$ of arbitrarily large radius $R$
containing all zeroes of $c$.
The winding number (with respect to $0$) of $c$ around $R$
is $d$, so for large enough $k$ the winding number of $\frac{1}{k} c_k$
is also $d$, thus one zero of $c_k$,
without loss of generality denote it $ x_{d+1}^k$ lies outside of $C_R$.
Hence the sequence $ x_{d+1}^k$ diverges to infinity.
Since all coefficients of the polynomial $c_k$ are real,
its zeroes are either real or occur in conjugate pairs.
So $ x_{d+1}^k$ must be real,
since if not $c_k$ would have two zeroes outside of $C_R$.

Let us retain the notation from the proof of Lemma \ref{lem:charPoly}.
Then
$$c_k=(-k-\la) c+s \la h\in\RR[\la],$$
where $c\in\RR[\la]$ is of degree $d$,
$h\in\RR[\la]$ is of degree $\leq d-1$, and $s\in\RR$.
Thus for $\la>0$ big enough, $-\la c$ dominates $s \la h$.
For such $\la>0$ the sign of $c_k(\la)$ will equal
the sign of $-c(\la)$ for any $k>0$. This shows that with $R$ big,
the zero $x_{d+1}^k$ of $c_k$ outside of $C_R$ must be negative,
thereby concluding the proof.
\end{proof}

\begin{theorem}\label{thm:lstab2}
Let $S$ be a stoichiometric matrix corresponding to a CRN
and let $\wh S$ be one of its
sign fixing matrices. Then there exists a choice of rate constants
for the added reactions such that the equilibria of
the reaction form ODEs corresponding to $S$ are locally
 asymptotically stable if and only if the same holds
for the equilibria of
the reaction form ODEs corresponding to $\wh S$.
\end{theorem}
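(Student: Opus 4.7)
The plan is to induct on $n=|\cB|$, the number of equivalence classes of bad submatrices of $S$. The base case $n=0$ is trivial, since then $\wh S = S$ and no rate constants need to be chosen.

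For the inductive step, I would let $\check S$ be the stoichiometric matrix obtained from $S$ by one application of the sign fixing algorithm, let $k$ denote the rate constant of the newly added reaction, and fix an equilibrium $e$ of $S$ together with its corresponding equilibrium $\check e$ of $\check S$ supplied by Corollary \ref{cor:eq}. Theorem \ref{thm:lstab1} then tells us that as $k\to\infty$, $d$ of the eigenvalues of $\check J_k(\check e)$ converge to the $d$ eigenvalues of $J(e)$, while the remaining eigenvalue is real and diverges to $-\infty$. If $J(e)$ is stable, i.e., all its eigenvalues have strictly negative real part, then for $k$ sufficiently large the $d$ converging eigenvalues of $\check J_k(\check e)$ remain in the open left half-plane by continuity, and the very negative extra eigenvalue is also there; hence $\check J_k(\check e)$ is stable. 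Conversely, if some eigenvalue of $J(e)$ has strictly positive real part, then for $k$ sufficiently large the corresponding eigenvalue of $\check J_k(\check e)$ also has strictly positive real part, and $\check J_k(\check e)$ is unstable. Therefore, for $k$ above a threshold $k_0(e)$, the equilibrium $e$ is locally asymptotically stable if and only if $\check e$ is.

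Since $\check S$ has strictly fewer bad submatrices than $S$ by Remark \ref{rem:sflessbad}, applying the inductive hypothesis to $\check S$ produces rate constants for the remaining added reactions realizing the stability correspondence between $\check S$ and $\wh S$. Composing these with the first threshold gives rate constants for all reactions added in building $\wh S$ from $S$, completing the induction.

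The main obstacle I expect is the borderline case in which $J(e)$ has an eigenvalue on the imaginary axis: then the corresponding eigenvalue of $\check J_k(\check e)$ could perturb to either side of the axis as $k\to\infty$, and the equivalence would fail (indeed, a direct calculation starting from the formula $c_k=(-k-\la)c+s\la h$ from the proof of Lemma \ref{lem:charPoly} shows the first-order correction is $O(1/k)$ whose sign depends on the detailed structure). I would handle this either by restricting to hyperbolic equilibria (the usual setting for local asymptotic stability) or by a genericity argument on the rate constants. A secondary point is obtaining a single $k_0$ that works uniformly over all equilibria; this is straightforward when the equilibrium set is finite, as is generic under mass action kinetics, by taking the maximum of the thresholds $k_0(e)$.
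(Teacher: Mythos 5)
Your proof is correct and takes exactly the same route as the paper, which simply states that the result ``follows easily from Theorem \ref{thm:lstab1} by an induction on the number of bad submatrices in $S$.'' Your fleshed-out version, including the caveats about eigenvalues on the imaginary axis and about choosing a single threshold $k_0$ uniformly over the equilibria, is in fact more careful than the paper's one-line argument.
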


\begin{proof}
This follows easily from Theorem \ref{thm:lstab1} by an induction on the
number of bad submatrices in $S$.
\end{proof}

\subsection{Craciun-Feinberg theory: determinants of Jacobians}

This brief subsection is for those familiar with
the Craciun-Feinberg (CF) theory \cite{CF05,CF06}
and we observe that it behaves well under sign fixing.
Recall the key requirement of the CF theory in order to invoke
its consequences is
that the determinant $\Delta$ of the Jacobian has an unambiguous
sign.
Note from \eqref{eq:detJac} with $\lambda=0$ that
the determinant of the Jacobian of the sign fixed CRN
is a scalar times $\Delta$.
Thus one determinant has an unambiguous sign if and only if the other
one does.

The conclusion is that if the CF theory applies to a CRN,
then it applies to the closely related
CRN whose Jacobian respects a sign pattern.

\section{Deficiency vs.~sign patterns}
\label{sec:defic}

An important notion in chemical networks is that
of {\bf deficiency}. In this section we show that sign fixing
might increase the deficiency of a CRN by at the most the number
of bad submatrices for the stoichiometric matrix of the original CRN.

We follow the notation and terminology
of Gunawardena \cite{Gu} (or see \cite{Fein79,CDSS}).
 Thus, we denote:
 \begin{eqnarray*}
  n&:=& \text{the number of complexes of the network},\\
  \ell &:=& \text{the number of linkage classes of the network},\\
     s &:=& \text{the rank of the stoichiometric matrix},
     \end{eqnarray*}
and the {\bf topological deficiency} of the network is
$$
\delta:=n-\ell-s.
$$

\subsection{Zero deficiency vs.~sign patterns}
\label{sec:zerodef}

A natural question is whether the sign pattern
 of $f'(x)=Sv'(x)$ has
 any correlation to the CRN having zero deficiency.
 The
 answer is no, and in this subsection we give examples of
 \begin{enumerate}
\item
chemical networks $S$
 with zero deficiency and no sign pattern for $Sv'(x)$;
 \item
chemical networks $S$ with nonzero deficiency and a sign pattern for
$Sv'(x)$.
\end{enumerate}

\begin{exam}[see \protect{\cite[\S 4.3]{Ka}} for more details]
Consider the reaction network
\begin{align*}
A &\rightarrow B \\
B &\rightarrow C \\
C &\rightleftharpoons A+B.
\end{align*}
The deficiency of the CRN is easily seen to be zero.
However,
$$ S =  \begin{bmatrix}
         -1 & 0 & -1 & 1\\
                    1 & -1 & -1 & 1 \\
                                     0 & 1  & 1  & -1
                                                       \end{bmatrix}  $$
so $Sv'(x)$ will not respect a sign pattern (Theorem \ref{thm:super}). It will have exactly
one entry without a sign. In order to obtain an example of a
deficiency zero network with an arbitrary number of non-signed entries
in the Jacobian, one simply considers a network with the
following stoichiometry:
$$
\begin{bmatrix}
S \\
& S \\
&&\ddots \\
&&& S
\end{bmatrix}.
$$
                   \end{exam}

Conversely, having a sign pattern will not yield any information about
the deficiency of the network.

\begin{exam}
Consider the chemical reaction network
$$ B+C \rightleftharpoons  A  \rightleftharpoons  B'+C'$$
$$B \rightleftharpoons  B'\rightleftharpoons  C \rightleftharpoons  C'$$
with stoichiometric matrix
$$
S=\begin{bmatrix}
 -1 & -1 & 0 & 0 & 0 & 1 & 1 &
   0 & 0 & 0 \\
 1 & 0 & -1 & 0 & 0 & -1 & 0 &
   1 & 0 & 0 \\
 0 & 1 & 1 & -1 & 0 & 0 & -1 &
   -1 & 1 & 0 \\
 1 & 0 & 0 & 1 & -1 & -1 & 0 &
   0 & -1 & 1 \\
 0 & 1 & 0 & 0 & 1 & 0 & -1 &
   0 & 0 & -1
\end{bmatrix}.
$$
By Theorem \ref{thm:super}, $Sv'(x)$ respects an unambiguous sign pattern but
the deficiency of the network is one.
To achieve arbitrary
deficiency one can employ a block diagonal construction as above.
\end{exam}

\subsection{Deficiency and the sign fixing algorithm}
\label{sec:deficSFix}

In this subsection we
consider how the deficiency of a CRN changes
after we apply the sign fixing algorithm to produce a new CRN.

Let $S_1$ be the stoichiometric matrix for
a CRN, and let $S_2$ be
the sign fixing matrix of $S_1$ with respect to some bad submatrix.
All variables with subscript 1 refer to
the original CRN and variables with subscript 2 refer to
the new CRN unless otherwise noted.
Also, $\mathcal{C}$ denotes the set of all complexes of a network
and $\mathcal{L}$ denotes the set of all linkage classes of a network.
We also denote
$\Delta \delta := \delta_2-\delta_1$,
$\Delta n := n_2 - n_1$, $\Delta \ell := \ell_2 - \ell_1$,
and $\Delta s = s_2 - s_1$.
Assuming that $S_1$ has a bad submatrix corresponding to the species $A,B$,
then this network has 2 reactions of the form
\begin{align}
                 \label{eq:badV1} p_1A+C_1 &\to p_2B + C_2 & \\
                 \label{eq:badV2}                 p_3A+p_4B+C_3 & \to C_4,
                                     \end{align}
where $p_1,p_2,p_3,p_4 \in \mathbb{N}$ and $C_1,C_2,C_3,C_4$ are some
(possibly empty) positive linear combination of species.
The only changes to the new network are we add a new species $B'$, reaction \eqref{eq:badV1} is replaced by
\begin{equation}\label{eq:badV1'}
p_1A+C_1 \rightarrow B' + C_2
\end{equation}
and we create an additional reaction
\begin{equation}\label{eq:badV2'}
B' \rightarrow p_2B.
\end{equation}
By Proposition \ref{prop:ker}, we always have $\Delta s = 1$.
To get a better handle on the change of deficiency,
we proceed as follows.

\begin{lemma}\label{lem:v1}
If $S_1$ is the stoichiometric matrix of a CRN and $S_2$ is its sign fixing matrix with respect to a bad submatrix, then the following inequalities are sharp:
\begin{equation}\label{eq:v1}
\Delta \ell \leq 2 \text{  and  } 1 \leq \Delta n \leq 3.
\end{equation}
\end{lemma}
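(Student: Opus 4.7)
The plan is to track, in one step of the sign-fixing algorithm, exactly which complexes appear or disappear and how the linkage graph $G$ (with vertices $\mathcal{C}$ and one edge per reaction) is modified. Since the algorithm only deletes reaction \eqref{eq:badV1}, inserts \eqref{eq:badV1'} and \eqref{eq:badV2'}, and introduces a single new species $B'$, the affected complexes are confined to $p_2B+C_2$, $B'+C_2$, $B'$, and $p_2B$, and the lemma reduces to a short case analysis.

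For $\Delta n$: the complexes $B'+C_2$ and $B'$ both contain the brand-new species $B'$ and are therefore always new, although they coincide in the degenerate case $C_2=\emptyset$. The complex $p_2B$ is new precisely when it did not already appear in the original network. The only complex that can disappear is $p_2B+C_2$, and this happens if and only if \eqref{eq:badV1} was its sole occurrence in the original network; note that if $C_2=\emptyset$ then $p_2B+C_2=p_2B$ still appears in \eqref{eq:badV2'}, so no removal occurs. Summing the possible additions and removal, one obtains $1\le\Delta n\le 3$.

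For $\Delta \ell$: I would transform $G_1$ into $G_2$ by four local moves and keep score, noting that since the vertex $B'$ is brand-new, no move can merge two preexisting components. (i) Removing the edge of \eqref{eq:badV1} either leaves the component untouched or splits it in two, contributing $0$ or $+1$. (ii) If the vertex $p_2B+C_2$ must be deleted, it is already isolated after (i), so the net contribution of (i) and (ii) together remains $0$. (iii) Adding the vertex $B'+C_2$ together with the edge from \eqref{eq:badV1'} just hangs a new leaf off the component of $p_1A+C_1$, contributing $0$. (iv) Adding the vertex $B'$ and the edge from \eqref{eq:badV2'} contributes $+1$ precisely when $p_2B$ is a new complex (in which case $\{B',p_2B\}$ is a fresh class), and $0$ otherwise. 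Adding at most two $+1$'s gives $\Delta\ell\le 2$.

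Sharpness is established by small examples. The two-reaction network $A\to B$, $A+B\to\emptyset$ has all the $C_i$ empty and achieves $\Delta n=1$ (here $B'$ is the only genuinely new complex). The network $A\to 2B+C$, $A+B\to D$, $2B+C\to D+A$ achieves $\Delta n=3$ and $\Delta\ell=2$ simultaneously: $C_2=C$ is nonempty, $2B$ is not a complex of the original network, $2B+C$ survives in $G_2$ thanks to the third reaction, and deleting the edge coming from \eqref{eq:badV1} splits the linkage class $\{A,\,2B+C,\,D+A\}$ into two. The main obstacle is handling the degenerate cases where complexes coincide (most notably $B'=B'+C_2$ and $p_2B=p_2B+C_2$ when $C_2=\emptyset$); once these are separated out, all other steps are straightforward bookkeeping.
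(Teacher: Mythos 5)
Your proof is correct and follows essentially the same route as the paper: identify $B'+C_2$, $B'$, $p_2B$ as the only possibly new complexes (with $p_2B+C_2$ the only one that can vanish) to get $1\le\Delta n\le 3$, bound $\Delta\ell\le 2$ by locating the at most two ways a linkage class can be gained, and then exhibit a network attaining $\Delta n=3$, $\Delta\ell=2$ (the paper uses $A\to B+2C\to 5D$, $A+B\to C$ in the same role as your three-reaction example). The only differences are cosmetic: your four-move graph surgery for $\Delta\ell$ spells out more carefully than the paper's one-line count that the extra classes come from splitting the old class of $p_1A+C_1$ and from the possibly isolated pair $\{B',p_2B\}$, and you add a separate example for the lower bound $\Delta n=1$, which the paper dismisses as obvious.
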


\begin{proof}
With the notation above, the only possible new complexes are $B' + C_2, B'$, and $p_2B$. Hence $\Delta n \leq 3$.
Also, the only possibly new linkage classes are $[B' + C_2]_2$ and $[p_2B]_2$,
so $\Delta \ell \leq 2$.
The fact that $\Delta n \geq 1$ is obvious.

To show that the inequalities \eqref{eq:v1} are sharp,
consider the following network:
\begin{eqnarray*}
& A \rightarrow B+2C \rightarrow 5D & \\
& A+B \rightarrow C.&
\end{eqnarray*}
This CRN has the stoichiometric matrix
$$S_1 = \begin{bmatrix}
    -1 & -1 & 0 \\
    1 & -1 & -1 \\
    2 & 1 & -2 \\
    0 & 0 & 5
    \end{bmatrix}.$$
    Hence $n_1=5$ and $\ell_1 = 2$. The sign fixing matrix for $S_1$ (with respect
to the species $A,B$) is
    $$S_2 = \begin{bmatrix}
    -1 & -1 & 0 & 0\\
    0 & -1 & -1 & 1\\
    2 & 1 & -2& 0\\
    0 & 0 & 5 & 0\\
    1 & 0 & 0 & -1
    \end{bmatrix},$$
    and our new chemical network is
    \begin{eqnarray*}
    & A \rightarrow 2C+B', \ A+B \rightarrow C & \\
& B+2C \rightarrow 5D,\, B' \rightarrow B. &
\end{eqnarray*}
We thus see that $n_2=8$ and $\ell_2=4$, so $\Delta n = 3$ and $\Delta \ell = 2$.
\end{proof}

We now need an efficient way to determine which complexes
and reactions of the new CRN affect $\Delta n$ and $\Delta \ell$.
We shall define functions which will precisely determine which complexes
and linkage classes in the new CRN increase $\Delta n$ and $\Delta \ell$.
Define $\phi:\{B'+C_2,B',p_2 B\} \rightarrow \{0,1\}$ by
\begin{eqnarray*}
\phi(B'+C_2)&=&
\begin{cases}
                    2 & \text{if } C_2 \neq \emptyset \text{ and } p_2 B +C_2 \in \mathcal{C}_2  \\
                   1 & \text{otherwise}
                    \end{cases} \\
\phi(p_2 B) &=& \begin{cases}
                            1 & \text{if } p_2 B \not \in \mathcal{C}_1 \\
                            0 & \text{otherwise}
                            \end{cases}.
\end{eqnarray*}
We also define $\psi: \{[B'+C_2]_2, [B']_2\} \rightarrow \{0,1\}$ such that
\begin{eqnarray*}
\psi([B'+C_2]_2) &=& \begin{cases}
                    1 & \text{if } p_2 B +C_2 \in \mathcal{C}_2 \text{ and } [B'+C_2]_2 \bigcap [p_2 B+C_2]_2 = \emptyset \\
                   0 & \text{otherwise}
                    \end{cases}\\
                    \psi([B']_2)&=&
\begin{cases}
            1 & \text{if } p_2 B \not \in \mathcal{C}_1 \text{ and } C_2 \neq \emptyset \\
            0 & \text{otherwise}
            \end{cases}.
            \end{eqnarray*}
The advantage of this new notation is that we now have
a succinct way to measure $\Delta n$ and $\Delta \ell:$
\begin{eqnarray}
\label{eq:v3}
\Delta n &=& \phi(B'+C_2) + \phi(p_2 B) \\
\label{eq:v4}
\Delta \ell &=& \psi([B'+c_2]_2) + \psi([B']_2).
\end{eqnarray}
Equation \eqref{eq:v4}
follows directly from the definition of $\Delta \ell$
and the construction of the new network.
However, \eqref{eq:v3} needs more justification.

\begin{lemma}\label{lem:v2}
With the setup described above, \eqref{eq:v3} holds.
\end{lemma}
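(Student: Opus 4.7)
The plan is to carry out a direct case analysis, comparing $\mathcal{C}_1$ with $\mathcal{C}_2$ and tracking exactly which complexes are added and which are removed, so that
\[
\Delta n = |\mathcal{C}_2 \setminus \mathcal{C}_1| - |\mathcal{C}_1 \setminus \mathcal{C}_2|.
\]
The starting observation is that the only reactions differing between $S_1$ and $S_2$ are the deleted reaction \eqref{eq:badV1} and the newly inserted reactions \eqref{eq:badV1'} and \eqref{eq:badV2'}. All other reactions contribute the same complexes to both networks. Hence the only candidates for membership in $\mathcal{C}_2 \setminus \mathcal{C}_1$ are $B'+C_2$, $B'$, and $p_2 B$, while the only candidate for removal is the discarded product complex $p_2 B + C_2$ of \eqref{eq:badV1}.

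Since $B'$ is a brand new species absent from $S_1$, every complex involving $B'$ automatically lies in $\mathcal{C}_2 \setminus \mathcal{C}_1$. So $B'$ always contributes $+1$, and $B'+C_2$ contributes an additional $+1$ exactly when $C_2 \neq \emptyset$ (otherwise $B'+C_2 = B'$ and the two candidates coincide). The candidate $p_2 B$ contributes $+1$ precisely when $p_2 B \notin \mathcal{C}_1$. For removal, $p_2 B + C_2 \in \mathcal{C}_1$ always holds, and $p_2 B + C_2 \in \mathcal{C}_2$ iff some reaction of the new CRN still uses it as a complex; note that when $C_2 = \emptyset$, the added reaction \eqref{eq:badV2'} has product $p_2 B = p_2 B + C_2$, so nothing is removed.

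Three cases now finish the proof. In Case (i), $C_2 \neq \emptyset$ and $p_2 B + C_2 \in \mathcal{C}_2$: two genuinely new complexes $B'$ and $B'+C_2$ are added, possibly joined by $p_2 B$, with no removal, so $\Delta n = 2 + [p_2 B \notin \mathcal{C}_1]$, which equals $\phi(B'+C_2) + \phi(p_2 B)$ by the piecewise definitions. In Case (ii), $C_2 \neq \emptyset$ and $p_2 B + C_2 \notin \mathcal{C}_2$: the same additions occur, but $p_2 B + C_2$ is lost, giving $\Delta n = 1 + [p_2 B \notin \mathcal{C}_1] = \phi(B'+C_2) + \phi(p_2 B)$. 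In Case (iii), $C_2 = \emptyset$: the two candidates $B'$ and $B'+C_2$ collapse to a single new complex $B'$, while $p_2 B = p_2 B + C_2 \in \mathcal{C}_1$ is preserved from deletion by \eqref{eq:badV2'}, so $\Delta n = 1 = \phi(B'+C_2) + \phi(p_2 B)$.

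The argument is pure bookkeeping, and I expect no serious obstacle. The only place where one must be careful is the degenerate case $C_2 = \emptyset$: there the two candidate new complexes coincide, and simultaneously the old product complex $p_2 B$ is rescued from removal precisely by the added reaction. This is exactly why the asymmetric piecewise definition assigns $\phi(B'+C_2) = 1$ (not $2$) and $\phi(p_2 B) = 0$ in that case, making the formula hold uniformly.
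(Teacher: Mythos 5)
Your proof is correct and follows essentially the same route as the paper: a direct bookkeeping argument identifying $\{B'+C_2,\,B',\,p_2B\}$ as the only possible additions and $p_2B+C_2$ as the only possible removal, then splitting into cases according to whether $p_2B+C_2\in\mathcal{C}_2$ and whether $C_2=\emptyset$. Your closing remark correctly isolates the degenerate case $C_2=\emptyset$, which is exactly the point the paper's piecewise definition of $\phi$ is designed to absorb.
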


\begin{proof}
By construction, notice that we always have
$\mathcal{C}_1 \subseteq \mathcal{C}_2$ or
$\mathcal{C}_1 \setminus \{p_2B+C_2\} \subseteq \mathcal{C}_2$.
Also, $\{B'+C_2, B', p_2B\}$ are the only possible
new complexes that are not in $\mathcal{C}_1$.
Observe that if $p_2B+C_2 \in \mathcal{C}_2$,
then $\mathcal{C}_2=\mathcal{C}_1 \bigcup \{B'+C_2,B',p_2B\}$, whence
\begin{eqnarray*}
\Delta n&=&
\text{card}(\mathcal{C}_1 \cup \{B'+C_2, B', p_2B\}) - \text{card}(\mathcal{C}_1) \\
&=&\big(\text{card}(\mathcal{C}_1 \cup \{B'+C_2, B'\}) - \text{card}(\mathcal{C}_1)\big) + \big(\text{card}(\mathcal{C}_1 \cup \{p_2B\}) - \text{card}(\mathcal{C}_1)\big) \\
&=& \text{card}(\mathcal{C}_1 \cup \{B'+C_2, B'\}) - \text{card}(\mathcal{C}_1)+ \phi(p_2B),
\end{eqnarray*}
where the last equality follows by the definition of $\phi$.
Also, if $C_2 = \emptyset$, then
$$\text{card}(\mathcal{C}_1 \cup \{B'+C_2, B'\})
- \text{card}(\mathcal{C}_1)=1; \text{otherwise},
\text{card}(\mathcal{C}_1 \cup \{B'+C_2, B'\})
- \text{card}(\mathcal{C}_1)=2,$$
so
$\phi(B'+C_2)= \text{card}
(\mathcal{C}_1 \cup \{B'+C_2, B'\}) - \text{card}(\mathcal{C}_1)$
by construction.
This implies $\Delta n = \phi(B'+C_2)+\phi(p_2B)$, as desired. On the other hand, suppose $p_2B+C_2 \not \in \mathcal{C}_2$. If $C_2=\emptyset$, then $p_2B+C_2=p_2B \in \mathcal{C}_2$, which is a contradiction. Thus we must have $C_2 \neq \emptyset$. A simple count shows
\begin{eqnarray*}
\Delta n &=& \text{card}(\mathcal{C}_1 \setminus \{p_2B+C_2\} \cup \{B'+C_2, B', p_2B\}) - \text{card}(\mathcal{C}_1)= \begin{cases}
            2 & \text{if } p_2 B \not \in \mathcal{C}_1 \\
            1 & \text{otherwise}
            \end{cases}\\
            &=& \phi(B'+C_2) + \phi(p_2B),
\end{eqnarray*}
where the last equality follows directly from the definition of $\phi$.
\end{proof}

\begin{theorem}\label{thm:v1}
Let $S_1$ be the stoichiometric matrix to a chemical network with a bad submatrix, and let $S_2$ be the sign fixing matrix with respect to this bad submatrix. Then $0 \leq \Delta \delta \leq 1,$ and this inequality is sharp.
\end{theorem}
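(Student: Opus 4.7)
The plan is to combine Proposition \ref{prop:ker} with Lemmas \ref{lem:v1} and \ref{lem:v2}. Since $\delta = n - \ell - s$, we have $\Delta \delta = \Delta n - \Delta \ell - \Delta s$, and Proposition \ref{prop:ker} (as already noted just before Lemma \ref{lem:v1}) gives $\Delta s = 1$. Substituting \eqref{eq:v3} and \eqref{eq:v4} rewrites this as
\[
\Delta\delta = \bigl[\phi(B'+C_2) - \psi([B'+C_2]_2)\bigr] + \bigl[\phi(p_2 B) - \psi([B']_2)\bigr] - 1,
\]
so the theorem will follow once we bound each bracket appropriately.

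Next I would do a finite case analysis by unpacking the definitions of $\phi$ and $\psi$. The key two observations are: (i) $\phi(B'+C_2) - \psi([B'+C_2]_2)\in\{1,2\}$ always, with value $2$ only possible when $C_2\neq\emptyset$ (this is forced by the definition of $\phi(B'+C_2)$); and (ii) $\phi(p_2 B) - \psi([B']_2)\in\{0,1\}$ always, with value $1$ only possible when $C_2=\emptyset$ (for if $C_2\neq\emptyset$ and $p_2B\notin\mathcal{C}_1$ then $\psi([B']_2)=1$ exactly cancels $\phi(p_2 B)=1$, while if $p_2B\in\mathcal{C}_1$ both terms vanish). Because the maximizing conditions for the two brackets are mutually exclusive via the $C_2=\emptyset$ dichotomy, the sum of the brackets lies in $\{1,2\}$, and thus $\Delta\delta\in\{0,1\}$.

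For sharpness I would exhibit two explicit networks. The example from the proof of Lemma \ref{lem:v1} already realizes $\Delta\delta=0$. For $\Delta\delta=1$ I propose the CRN
\[
A\to B+C,\qquad A+B\to D,\qquad B+C\to A,
\]
whose bad submatrix lives on species $A,B$ and in which the third reaction has been included precisely to keep $B+C$ and $A$ in the same linkage class of $\widehat{S}$; a direct tally should give $\Delta n=3$, $\Delta\ell=1$, $\Delta s=1$, so $\Delta\delta=1$. The main obstacle I anticipate is the bookkeeping inside observation (i): one has to handle carefully the boundary case $C_2=\emptyset$, in which $B'+C_2=B'$ and $p_2B+C_2=p_2B$ land in the same linkage class of $\widehat{S}$ through the new reaction $B'\to p_2 B$ (forcing $\psi=0$), and to confirm that when $\phi(B'+C_2)=2$ the linkage-class intersection test correctly detects the two possibilities $\psi\in\{0,1\}$. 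Once this case table is drawn out explicitly, the bounds and their sharpness are immediate.
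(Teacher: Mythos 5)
Your proposal is correct and takes essentially the same route as the paper: both arguments rest on $\Delta s=1$ from Proposition \ref{prop:ker}, Lemmas \ref{lem:v1} and \ref{lem:v2}, and a finite case analysis of $\phi$ and $\psi$ (including the crucial observation that $C_2=\emptyset$ forces $[B'+C_2]_2$ and $[p_2B+C_2]_2$ to meet via the reaction $B'\to p_2B$), with your regrouping into two bracketed differences being only a direct-bound reorganization of the paper's proof by contradiction. Your sharpness example $A\to B+C$, $A+B\to D$, $B+C\to A$ differs from the paper's $2A\rightleftharpoons 3B+C$, $A+B\to C$, but it checks out: $\Delta n=3$, $\Delta\ell=1$, $\Delta s=1$, hence $\Delta\delta=1$.
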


\begin{proof}
First, Lemma \ref{lem:v1}
shows $\Delta \delta = \Delta n- \Delta \ell - 1 \leq 3-\Delta \ell - 1 \leq 2$. Notice that if $\Delta \delta=2,$ then $\Delta n =3$ and $\Delta \ell = 0$, so
$\phi(B'+C_2)=2$, $\phi(p_2 B)=1$ and $\psi([B'+C_2]_2) = \psi([B']_2)=0$.
However, $\phi(B'+C_2)=2$ implies $C_2 \neq \emptyset$,
and $\phi(p_2B)=1$ implies $p_2B \not \in \mathcal{C}_1$ by construction.
Hence $\psi([B']_2)=1$ by the definition of $\psi$ and this is a contradiction. Thus, we cannot
 have $\Delta n=3$ and $\Delta \ell=0.$ This proves $\Delta \delta \leq 1$.

 Now suppose $\Delta \delta < 0$ to derive a contradiction. By Lemma \ref{lem:v1}, $\Delta n=2$ and $\Delta \ell = 2,$ or $\Delta n = 1$ and $\Delta \ell \geq 1$.

{\sc Case 1}: Suppose $\Delta n = 2$ and $\Delta  \ell =2 $. Notice if
$\Delta \ell = 2$ then $\psi([B'+C_2]_2)=\psi([B']_2)=1$ and thus
$C_2 \neq \emptyset,$ $p_2B+C_2 \in \mathcal{C}_2,$ and
$p_2 B \not \in \mathcal{C}_1$ by construction. Hence
$\phi(p_2 B)=1$ and $\phi(p_2B+C_2)=2$ by construction, so $\Delta n=2+1=3$ by
\eqref{eq:v3} and this is a contradiction.

{\sc Case 2}: Suppose $\Delta n =1$ and $\Delta \ell \geq 1$.
Notice, $\Delta n =1$ and \eqref{eq:v3} imply $\phi(p_2 B)=0$ and
$p_2B \in \mathcal{C}_1,$ so $\psi([B']_2)=0$ and $\psi([B'+c_2]_2)=1$ since $\Delta \ell \geq 1$ implies $p_2B+C_2 \in \mathcal{C}_2$ and $[B'+C_2]_2 \cap [p_2 B+C_2]_2 = \emptyset$.
  If $C_2 = \emptyset$, then $[B'+C_2]_2 = [B']_2 = [p_2B]_2=[p_2B+C_2]_2$. Hence $ [B'+C_2]_2 \cap [p_2 B+C_2]_2 \neq \emptyset$ and this is a contradiction. Thus $C_2 \neq \emptyset$. But then $\phi(B'+C_2)=2$ by construction, and $\Delta n \geq 2$ by \eqref{eq:v3}; contradiction.

To show that these inequalities are sharp, consider the following network:
\begin{eqnarray*}
 2A &\rightleftharpoons& 3B+C\\
A+B &\rightarrow& C.
\end{eqnarray*}
This CRN has the stoichiometric matrix
$$S_1 = \begin{bmatrix}
    -2 & -1 & 2 \\
    3 & -1 & -3 \\
    1 & 1 & -1
    \end{bmatrix}.$$
    Hence $n_1=4$ and $\ell_1 = 2$. The sign fixing matrix for $S_1$ with
    respect to $A,B$ is
    $$S_2 = \begin{bmatrix}
    -2 & -1 & 2 & 0\\
    0 & -1 & -3 & 3\\
    1& 1 & -1& 0\\
    1 & 0 & 0 & -1
    \end{bmatrix},$$
    and our new chemical network is
    \begin{eqnarray*}
 3B+C &\rightarrow& 2A \rightarrow B'+C\\
 A+B& \rightarrow& C\\
B'& \rightarrow& 3B.
\end{eqnarray*}
We thus see that $n_2=7$ and $\ell_2=3$. So $ \Delta n = 3$ and $\Delta \ell = 1$.
Thus, $\Delta \delta = \Delta n - \Delta \ell - \Delta s = 3-1-1=1.$
\end{proof}

\begin{cor}\label{cor:badS}
Suppose $S$ is the stoichiometric matrix for some chemical network, and it contains $k$ bad submatrices. If $\widehat{S}$ is its sign fixing matrix, then $0 \leq \Delta \delta \leq k$.
\end{cor}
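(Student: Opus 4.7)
The plan is a straightforward induction on the number $k$ of bad submatrices of $S$, using Theorem \ref{thm:v1} as the one-step bound and Remark \ref{rem:sflessbad} to guarantee that the induction terminates in at most $k$ steps.

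By construction of the sign fixing algorithm (see \S \ref{sec:algor} and \S \ref{subsec:nonuniq}), we may write
\begin{equation*}
S = S_0, \; S_1, \; \ldots, \; S_m = \widehat{S},
\end{equation*}
where each $S_{i+1}$ is obtained from $S_i$ by eliminating a single bad submatrix. By Remark \ref{rem:sflessbad} the number of bad submatrices strictly decreases at each step, so starting from $k$ bad submatrices in $S_0$ we must have $m \leq k$. Denote by $\delta_i$ the topological deficiency of the CRN associated with $S_i$, and set $\Delta \delta_i := \delta_{i+1} - \delta_i$.

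Theorem \ref{thm:v1} applies verbatim to each transition $S_i \leadsto S_{i+1}$ and yields
\begin{equation*}
0 \leq \Delta \delta_i \leq 1, \qquad i = 0, 1, \ldots, m-1.
\end{equation*}
Summing these inequalities telescopically,
\begin{equation*}
\Delta \delta \;=\; \delta_m - \delta_0 \;=\; \sum_{i=0}^{m-1} \Delta \delta_i,
\end{equation*}
so $0 \leq \Delta \delta \leq m \leq k$, as claimed.

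The only point requiring care is that the single-step inequality of Theorem \ref{thm:v1} continues to hold after intermediate applications of the algorithm, but this is immediate since Theorem \ref{thm:v1} is stated for an arbitrary stoichiometric matrix possessing at least one bad submatrix, and each $S_i$ (for $i < m$) still has a bad submatrix to be eliminated. No new phenomena appear, and sharpness of the bound would follow (were it desired) by iterating the sharp example from the proof of Theorem \ref{thm:v1} via block diagonal construction.
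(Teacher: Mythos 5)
Your proof is correct and follows essentially the same route as the paper: decompose the passage from $S$ to $\widehat{S}$ into at most $k$ single-step applications of the algorithm and apply Theorem \ref{thm:v1} at each step. The paper's own proof is just a terser version of this same induction/telescoping argument.
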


\begin{proof}
Recall that by definition, $\widehat{S}$ is determined by a recursive sequence of at most $k$ sign fixing matrices, each with respect to a certain bad submatrix from the previous matrix in the sequence.
 An application of Theorem \ref{thm:v1} at each step yields our desired result.
\end{proof}

In fact, the upper bound for $\Delta\delta$ in Corollary \ref{cor:badS}
is the number of equivalence
classes of bad submatrices of $S$ as defined in \S \ref{subsec:nonuniq}.
Also, all sign fixed matrices $\wh S$ obtained from our algorithm have the
same deficiency (independent
of the order in which the sign fixing algorithm is applied).

Theorem \ref{thm:v1} together with formulas \eqref{eq:v3} and \eqref{eq:v4}
helps determining necessary conditions for $\Delta \delta =1$ for a stoichiometric matrix and its sign fixing matrix with respect to a certain bad submatrix.
Notice that $\Delta \delta =1$ implies $\Delta n =2$ and $\Delta \ell=0$ or $\Delta n=3$ and $\Delta \ell =1$. If $C_2 = \emptyset$, then $p_2B=p_2B+C_2 \in \mathcal{C}_1$, so $\phi(p_2B)=0$. Also, $C_2 = \emptyset$ implies  $\phi(B'+C_2)=1$ by construction, So $\Delta n=1$ by \eqref{eq:v3}, and hence,
$\Delta \delta =0$ by Theorem \ref{thm:v1}. This observation yields the following:

\begin{theorem}
Let $S$ be the stoichiometric matrix for a chemical network. Suppose that the column corresponding to each bad submatrix of $S$ with the positive entry has only one positive entry. Then if $\widehat{S}$ is a sign fixing matrix for $S$, we have $\Delta \delta =0$.
\end{theorem}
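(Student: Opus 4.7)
The plan is to proceed by induction on the number of bad submatrices of $S$, reducing the statement to a single application of the sign fixing algorithm and then running the constraint ``column with only one positive entry'' through the formulas \eqref{eq:v3} and \eqref{eq:v4}. The base case (no bad submatrices) is immediate since then $\wh S=S$ and $\Delta\delta=0$. For the inductive step, fix a bad submatrix $b$ of $S$ and let $\check S$ be obtained from $S$ by one step of the sign fixing algorithm applied to $b$. The key observation that makes the induction work is that this step only modifies the single column of $S$ carrying the positive entry of $b$ and appends one sparse row and column; by Remark \ref{rem:sflessbad} the bad submatrices of $\check S$ form a subset of those of $S$, and each of them sits in the same column of $\check S$ as it did in $S$. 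Consequently the theorem's hypothesis descends to $\check S$, and $\wh S$ may be viewed as a sign fixing matrix of $\check S$. It therefore suffices to prove $\Delta\delta=0$ for the single step $S\to\check S$.

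For that single step, I would translate the hypothesis on $b$ directly: the column of $S$ containing the unique positive entry $p_2$ of $b$ has no other positive entries, which in the notation of reactions \eqref{eq:badV1}--\eqref{eq:badV2} is the statement $C_2=\emptyset$. Substituting $C_2=\emptyset$ into the definitions of $\phi$ and $\psi$ from \S\ref{sec:deficSFix} gives everything. Since $p_2B+C_2=p_2B$ appears as the product of reaction \eqref{eq:badV1} in $S$, we have $p_2B\in\mathcal C_1$, so $\phi(p_2B)=0$; and $C_2=\emptyset$ places us in the ``otherwise'' branch of $\phi(B'+C_2)$, giving $\phi(B'+C_2)=1$. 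Formula \eqref{eq:v3} then yields $\Delta n=1$. For $\Delta\ell$, the clause ``$C_2\neq\emptyset$'' in the definition of $\psi([B']_2)$ fails, so $\psi([B']_2)=0$; and in $\check S$ the chain $p_1A+C_1\to B'\to p_2B$ keeps $B'$ and $p_2B$ in the same linkage class, so $[B'+C_2]_2=[B']_2$ and $[p_2B+C_2]_2=[p_2B]_2$ coincide, which kills the second clause in the definition of $\psi([B'+C_2]_2)$ and gives $\psi([B'+C_2]_2)=0$. Formula \eqref{eq:v4} then yields $\Delta\ell=0$. Combining with $\Delta s=1$ from Proposition \ref{prop:ker}, we conclude $\Delta\delta=\Delta n-\Delta\ell-\Delta s=1-0-1=0$, completing the inductive step.

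The main (and essentially only) obstacle is the bookkeeping in the first paragraph: one must verify that the hypothesis of the theorem is inherited by $\check S$, since the statement is quantified over \emph{all} bad submatrices. Once one has noted that a sign fixing step touches only the one column housing the positive entry of the eliminated bad submatrix, this is routine. All remaining work is a direct substitution into the definitions of $\phi$ and $\psi$ and into \eqref{eq:v3}--\eqref{eq:v4}, and in fact the observation $C_2=\emptyset\Rightarrow\Delta\delta=0$ is already foreshadowed in the paragraph immediately preceding the theorem statement.
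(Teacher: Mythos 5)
Your proof is correct and follows essentially the same route as the paper's: the hypothesis is translated into $C_2=\emptyset$, the functions $\phi$ (and in your case also $\psi$) are evaluated to get $\Delta n=1$ and $\Delta\ell=0$, and the result follows by induction over the bad submatrices. The only cosmetic difference is that you compute $\Delta\ell=0$ directly from $\psi$, whereas the paper instead concludes from $\Delta n=1$ together with the bound $\Delta\delta\geq 0$ of Theorem \ref{thm:v1}; your explicit check that the hypothesis is inherited by the intermediate matrices is a welcome piece of bookkeeping that the paper leaves implicit in its ``inductive procedure.''
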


\begin{proof}
By assumption, each bad submatrix of $S$ corresponds to 2 reactions of the form:
\begin{align}
                 \label{eq:badV1''} p_1A+C_1 &\to p_2B  & \\
                 \label{eq:badV2''}               p_3A+p_4B+C_2 & \to C_3,
                                     \end{align}
 where $A,B$ are species, $p_1,p_2,p_3,p_4 \in \mathbb{N}$ and $c_1,c_2,c_3$ are some (possibly empty) positive linear combination of species.
 As already shown, the deficiency for the sign fixing matrix of $S$ with respect to this bad submatrix does not change. An inductive procedure yields our desired result.
\end{proof}

\section{An alternative sign fixing algorithm?}
\label{sec:signfix}

Given a stoichiometric matrix $S$ with bad submatrices, there is an
easier way of eliminating these. Instead of performing the sign
fixing algorithm
for each submatrix separately and thus adding a row and a column
in every step, we can add only one row and column and eliminate all
bad submatrices in a single step.
Unfortunately, this construction changes the dimension of $\ker S$;
thus the two matrices yield reaction networks
with very different equilibria structure.
We illustrate this with an example.

\begin{exam}
Suppose
$$S = \begin{bmatrix}
    -2&-1&4&4&-4\\
    -12&4&4&0&0\\
    4&-1&-2&0&0\\
    10&-2&-6&-4&4
    \end{bmatrix}.$$
Notice that $S$ has several bad submatrices.
We start by adding a row and column of zeros to $S$.
Pick a bad $2\times 2$ submatrix
of $S$. Replace the positive entry $S_{p\ell}$ of $S$ by $0$, add $+1$ to the
$\ell$th entry of the new row and add $S_{p\ell}$ to the
$p$th entry of the new column. Repeat this for all the
bad submatrices.
After all the bad submatrices have been eliminated, the bottom right
entry is changed into the negative sum of all the entries in the last row.
We obtain a matrix $\wt S$ with no bad submatrices. In
our example this is
$$
\wt S=\begin{bmatrix}
 -2 & -1 & 0 & 0 & -4 & 8 \\
  -12 & 0 & 4 & 0 & 0 & 4 \\
   0 & -1 & -2 & 0 & 0 & 4 \\
    0 & -2 & -6 & -4 & 0 & 14 \\
     1 & 1 & 1 & 1 & 1 & -5
\end{bmatrix}.
$$

We note that
$$\Ker S= \Span \left\{ \begin{bmatrix}
0&0&0&1&1 \end{bmatrix}^t , \begin{bmatrix}
1&2&1&0&0
\end{bmatrix}^t
\right\}
$$
and
$$
\Ker S^t= \Span \left\{ \begin{bmatrix}
1&1&1&1
\end{bmatrix}^t\right\}.
$$
The corresponding kernels for $\wt S$ are:
$$
\ker \wt S= \Span \left\{ \begin{bmatrix}
2&0&4&1&3&2
\end{bmatrix}^t\right\}
$$
and
$$
\ker \wt S^t=\{0\}.
$$

Given its kernel, the ODE $\dot {\wt x}= \wt S \wt v(\wt x)$ can have
equilibria only on the boundary. In fact, each solution
to $ \wt S \wt v(\wt x)=0$
can be shown to satisfy $\wt v(\wt x)=0$.

Also, assuming mass action
kinetics,
$$
v(x)=\begin{bmatrix}
k_1
x_1^2 x_2^{12}  & k_2 x_1 x_3
   x_4^2  & k_3 x_3^2 x_4^6 & k_4 x_4^4
          &k_5 x_1^4
      \end{bmatrix}^t
$$
so for every $x_2\in\RR_{>0}$,
$$
x_1=\frac{k_2
   \sqrt[4]{k_4}}{2 \sqrt{k_1}
   \sqrt{k_3} \sqrt[4]{k_5}
   x_2^6},\quad
x_3=\frac{4 k_1^{3/2} \sqrt{k_3}
   \sqrt[4]{k_4} x_2^{18}}{k_2^2
   \sqrt[4]{k_5}},\quad x_4=\frac{k_2}{2 \sqrt{k_1}
   \sqrt{k_3} x_2^6}
$$
yields a positive solution to $Sv(x)=0$.

Hence it is not possible to recover positive equilibria for the chemical
CRN by $S$ from those obtained by $\wt S$.

Also note that there is a nonnegative vector orthogonal
to the range of $S$, thus the corresponding reaction form dynamics
has a conserved quantity.
On the other hand, $\wt S$ is not conserving.
\end{exam}

\section{Software}
\label{sec:soft}

The discovering of the
results in this paper was considerably facilitated
by computer experiments. The programs we wrote to do this
might be of value to a broad community, so we documented them
and provided tutorial examples.
They are found on the web site\\
\centerline{\url{
http://www.math.ucsd.edu/~chemcomp/}}

The Mathematica files provided  contain software for
dealing with equations that come from CRNs;
$dx/dt=f(x)=Sv(x)$ as in \eqref{eq:chemde}.
Some of our commands  focus on the Jacobian, $f '$, of $f$;
they do the following
\ben[\rm (1)]
\item compute the Jacobian $f '$ of $f$ (given say the stoichiometric matrix $S$);
\item
check existence of a sign pattern for $f '(x)$ which remains unchanged
for all $x  \geq 0$, using Theorem \ref{thm:super} in this paper;
\item
implement the sign fixing algorithm in \S \ref{sec:algor};
\item compute the Craciun-Feinberg (CF) determinant \cite{CF05,CF06,CTF} of $f '$
(governs CRNs with outflows for all species with
outflow rate constants equal to one);
\item compute the more general Helton-Klep-Gomez core determinant \cite{HKG}
of $f '$
(governs CRNs with any number of outflows).
\een

The CF determinant and core determinants are used in tests to count
the number of positive equilibria, namely $ x^*>0$
such that $f(x^*)=0$.
(For more information, please look at the papers
 \cite{CHW} and \cite{HKG}
 and the original Craciun, Feinberg et al.~papers, \cite{CF05,CF06,CTF}.)

Another part of our Mathematica package deals with
deficiency of reaction form differential equations, as
discussed  in \S \ref{sec:defic}.
The software allows us to compute
the deficiency of a CRN as well
as conversion of representations as follows.
One starts with the traditional representation
$f(x) = Sv(x)$.
Our program produces the representation
$$ Sv(x) = YA_k \psi(x)$$
where $A_k$ is the Laplacian of the ``complexes graph''
of the chemical reaction network.
$Y$ is the matrix whose columns are indexed by complexes
and which contain nonzero entries corresponding
to chemical species which enter the complex.
$\psi$ is a list of monomials in the chemical concentrations.
For details, see \cite{Gu,Fein79,HJ}.

 Our commands also  compute  the components of the complexes graph.
Capability to automatically plot planar graphs
is under development and should be available soon.

\section{Conclusions}

\begin{enumerate}[\rm (1)]
\item
The effect of
any  species  on another species is always (for all concentrations)
 inhibitory  or always  excitatory,
if
(generically only if for reversible CRNs) the species-reaction
graph of $S$ has no
bad  cycle.
\item
If a CRN has bad cycles, then
{\it the sign fixing algorithm produces
a CRN whose Jacobian respects a  sign pattern}.
The key properties are:
\begin{enumerate}[\rm (a)]
\item
 Equilibria in perfect correspondence;
\item
 Local asymptotic stability corresponds perfectly;
\item
 Conservation laws correspond;
\item
The determinant of the sign fixed Jacobian is a constant
      multiple of the original Jacobian, thus the theory
      of Craciun and Feinberg applies to both CRNs or to neither;
\item
Deficiency does not drop and its increase has a simple bound.
\end{enumerate}
\end{enumerate}

Possibly the tests and constructions outlined here
will be useful to some experimentalists.
In an experiment where one aims to understand which reactions occur
and the pattern of inhibition and excitation,
one ``first"  obtains  the species-reaction graph $\cG$.
Based on measuring some limited number of concentrations
one determines the inhibitory or excitatory effects.
An issue is whether or not enough concentrations were measured,
for a possibility is that species $x_i$ has
an excitatory effect at some concentrations and an inhibitory
effect at others.

The results here give a simple way to
sort out  this problem.
First suppose the CRN does not contain two reactions involving two
species A and B, one species  being consumed by both reactions
while  the other  species is consumed by one reaction and
produced by the other. Then there is no such problem
(see Theorem \ref{thm:chemSP}); in principle, one concentration
measurement (of all species) suffices. If $\cG$ has such a bad pair
of reactions, probably there is trouble.
Our sign fixing algorithm shows additional measurements
which, if they  can be made,  fix this trouble.

We emphasize that our analysis does not say what effect a species $i$
has on species $j$ in a single reaction
but it bears on its effect inside the entirety of the CRN.

\section*{Acknowledgments}
\small
The authors thank Jan Schellenberger for his
expert assistance and Rohun Kshirsagar for discussions
and for computations.
We thank Gheorghe Craciun for his guidance through
literature.
We thank Eduardo Sontag for perspective and insight he has provided.

The software mentioned in \S \ref{sec:soft}  was produced by
 Igor Klep and  Bill Helton
and students  Karl Fredrickson and  Vitaly Katsnelson.
Mauricio de Oliveira,
Rohun Kshirsagar and Ruth Williams contributed suggestions.
The graph plotting capabilities are being implemented with the
help of Marko Boben.

\end{document}